\author{Johannes Schmitt}
\title[good $G$-sets under partial geometric quotients]{A correspondence of good $G$-sets under partial geometric quotients}
\address{Departement Mathematik, ETH Z\"urich, R\"amistrasse 101, 8092 Z\"urich, Switzerland}
\email{johannes.schmitt@math.ethz.ch}
\date{\today}
\newcommand{\PP}{\ensuremath{\mathbb{P}}}
\renewcommand{\dim}{\text{dim}}
\newcommand{\sslash}{/\!\!/}
\DeclareMathOperator{\Hom}{Hom} 
\newcommand{\todo}[1]{}
\newcommand{\todoOld}[1]{}
\newcommand{\todoAlt}[1]{}
\newcommand{\todoFin}[1]{}
\newcommand{\comment}[1]{}
\newcommand{\detex}[1]{}  
\def\polhk#1{\setbox0=\hbox{#1}{\ooalign{\hidewidth
    \lower1.5ex\hbox{`}\hidewidth\crcr\unhbox0}}}
\tikzset{
  edge node/.code={%
      \expandafter\def\expandafter\tikz@tonodes\expandafter{\tikz@tonodes #1}}}
\tikzset{
  subseteq/.style={
    draw=none,
    edge node={node [sloped, allow upside down, auto=false]{$\subseteq$}}},
  Subseteq/.style={
    draw=none,
    every to/.append style={
      edge node={node [sloped, allow upside down, auto=false]{$\subseteq$}}}
  }
}    
\begin{document}
\begin{abstract}
For a complex variety $\widehat X$ with an action of a reductive group $\widehat G$ and a geometric quotient $\pi: \widehat X \to X$ by a closed normal subgroup $H \subset \widehat G$, we show that open sets of $X$ admitting good quotients by $G=\widehat G / H$ correspond bijectively to open sets in $\widehat X$ with good $\widehat G$-quotients. We use this to compute GIT-chambers and their associated quotients for the diagonal action of $\text{PGL}_2$ on $(\mathbb{P}^1)^n$ in certain subcones of the $\text{PGL}_2$-effective cone via a torus action on affine space. This allows us to represent these quotients as toric varieties with fans determined by convex geometry.
\end{abstract}

 \maketitle

 \newtheoremstyle{test}
  {}
  {}
  {\it}
  {}
  {\bfseries}
  {.}
  { }
  {}
 \newtheoremstyle{test2}
  {}
  {}
  {}
  {}
  {\bfseries}
  {.}
  { }
  {}
  
 \theoremstyle{test}
\newtheorem{Def}{Definition}[section]
\newtheorem{Exa}[Def]{Example}
\newtheorem{Exe}[Def]{Exercise}
\newtheorem{Theo}[Def]{Theorem}
\newtheorem{Lem}[Def]{Lemma}
\newtheorem{Cor}[Def]{Corollary}
\newtheorem{Pro}[Def]{Proposition}
\newtheorem*{Exa*}{Example}   
\newtheorem*{Pro*}{Proposition} 
\newtheorem*{Def*}{Definition}
\newtheorem*{Cor*}{Corollary}
\newtheorem*{Lem*}{Lemma}
\newtheorem*{Theo*}{Theorem}

\theoremstyle{test2}
\newtheorem{Rmk}[Def]{Remark}
\newtheorem*{Rmk*}{Remark}   

\section{Introduction}
Let $G$ be a reductive group acting on a variety $X$, then an important question in Geometric Invariant theory is to classify the open $G$-invariant subsets $U$ of $X$ having a good quotient under the action of $G$. Define
\begin{align}
 \label{eqn:DefU} \mathcal{U}_{(X,G)} = \left\{ \parbox{0.7\linewidth}{$U \subset X$ nonempty, open $G$-invariant such that a good quotient $U \to U\sslash G$ exists (in schemes over $\mathbb{C}$)}\right\},\\
 \label{eqn:DefUpr} \mathcal{U}^{\text{pr}}_{(X,G)} = \left\{ \parbox{0.7\linewidth}{$U \subset X$ nonempty, open $G$-invariant such that a good quotient $U \to U\sslash G$ exists with $U \sslash G$ a projective variety}\right\},\\
 \label{eqn:DefUprg} \mathcal{U}^{\text{pr,g}}_{(X,G)} = \left\{ \parbox{0.7\linewidth}{$U \subset X$ nonempty, open $G$-invariant such that an affine, geometric quotient $U \to U\sslash G$ exists and such that $U \sslash G$ is a projective variety}\right\}.
\end{align}
In this note we describe a geometric situation, in which these collections of open sets for two pairs $(X,G)$, $(\widehat X, \widehat G)$ can be identified. 
\begin{Def}
 Let $G,H$ be reductive, linear algebraic groups and let $G$ act on a variety $X$. A good (resp. geometric) $H$-lift of $(X,G)$ is the data of
 \begin{itemize}
  \item a reductive algebraic group $\widehat G$ containing $H$ as a closed normal subgroup together with an identification $\widehat G / H = G$,
  \item a variety $\widehat X$ with an action of $\widehat G$,
  \item a morphism $\pi: \widehat X \to X$, which is
  \begin{itemize}
   \item $\widehat G$-equivariant with respect to the action of $\widehat G$ on $X$ induced by the action of $G$ and the morphism $\widehat G \to \widehat G / H=G$,
   \item a good (resp. geometric) quotient for the induced action of $H$ on $\widehat X$.
  \end{itemize}
 \end{itemize}
\end{Def}
Then we prove the following result.
\begin{Theo} \label{Theo:lift}
 Let $\pi: \widehat X \to X$ be a good $H$-lift of $(X,G)$ for $X$ a variety with an action of the reductive group $G$. Then the map $U \mapsto \pi^{-1}(U)$ induces an injection $\mathcal{U}_{(X,G)} \to \mathcal{U}_{(\widehat X,\widehat G)}$. Moreover, for $U \in \mathcal{U}_{(X,G)}$ the map $\pi$ induces a natural isomorphism $\pi^{-1}(U) \sslash \widehat G \cong U \sslash G$. Thus we also get an injection $ \mathcal{U}^{\text{pr}}_{(X,G)} \to \mathcal{U}^{\text{pr}}_{(\widehat X,\widehat G)}$.
 
 If $\pi$ is a geometric $H$-lift, the correspondence above is a bijection and it induces a bijection $\mathcal{U}^{\text{pr,g}}_{(X,G)} \to \mathcal{U}^{\text{pr,g}}_{(\widehat X,\widehat G)}$.
\end{Theo}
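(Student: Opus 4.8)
The plan is to reduce the entire statement to a single ``quotients in stages'' principle: if $H \trianglelefteq \widehat G$ is closed normal with $\widehat G / H = G$ and $p \colon \widehat Y \to Y$ is a good quotient by $H$ (so that $G$ acts on $Y$), then a morphism $q \colon Y \to Z$ is a good quotient by $G$ if and only if $q \circ p \colon \widehat Y \to Z$ is a good quotient by $\widehat G$. The verification of this principle is elementary once one observes that for $H$ normal one has $(-)^{\widehat G} = ((-)^{H})^{G}$ on $\widehat G$-representations, so that the structure-sheaf identity $(q\circ p)_* \mathcal{O}_{\widehat Y}^{\widehat G} = \mathcal{O}_Z$ factors as $q_*((p_*\mathcal{O}_{\widehat Y})^H)^G$ and collapses using the two hypotheses in turn. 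Affineness and surjectivity are stable under composition, and the two separation axioms propagate because a good quotient sends closed invariant sets to closed sets and keeps disjoint ones disjoint; applying this first to $p$ and then to $q$ (using that $p(W)$ is $G$-invariant whenever $W$ is $\widehat G$-invariant, by equivariance of $p$) yields the separation axioms for $q\circ p$.

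Granting this, the forward direction is quick. Given $U \in \mathcal{U}_{(X,G)}$ with good quotient $q \colon U \to U \sslash G$, the restriction $\pi^{-1}(U) \to U$ is again a good quotient by $H$ (good quotients restrict over open subsets), the set $\pi^{-1}(U)$ is open and $\widehat G$-invariant by equivariance, and the stages principle shows $q \circ \pi \colon \pi^{-1}(U) \to U \sslash G$ is a good quotient by $\widehat G$. Hence $\pi^{-1}(U) \in \mathcal{U}_{(\widehat X, \widehat G)}$ and, by uniqueness of good quotients, $\pi^{-1}(U) \sslash \widehat G \cong U \sslash G$ naturally; this target is projective exactly when $U \sslash G$ is, giving the map on $\mathcal{U}^{\text{pr}}$. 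Injectivity is immediate: $\pi$ is surjective, so $\pi(\pi^{-1}(U)) = U$ recovers $U$ from $\pi^{-1}(U)$.

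For the geometric case the crucial new input is \emph{saturation}. Let $\widehat U \in \mathcal{U}_{(\widehat X, \widehat G)}$; it is $\widehat G$-invariant, hence $H$-invariant. Because $\pi$ is now a geometric quotient by $H$, its fibres are exactly the $H$-orbits, so $\pi^{-1}(\pi(x)) = H \cdot x \subset \widehat U$ for every $x \in \widehat U$; thus $\widehat U = \pi^{-1}(U)$ with $U := \pi(\widehat U)$. Since good quotients are submersive, $U$ is open, and it is $G$-invariant by equivariance. It remains to descend the $\widehat G$-quotient $\phi \colon \widehat U \to Q := \widehat U \sslash \widehat G$ to $U$: as $\phi$ is $H$-invariant it factors uniquely as $\phi = \overline\phi \circ \pi$ through the categorical quotient $\pi \colon \widehat U \to U$. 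One then checks $\overline\phi \colon U \to Q$ satisfies all good-quotient axioms by pulling each back along $\pi$ and invoking that $\pi$ is a good $H$-quotient, exactly as in the stages principle. This shows $U \in \mathcal{U}_{(X,G)}$ and $\pi^{-1}(U) = \widehat U$, so the map is surjective, hence bijective.

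The step I expect to be the main obstacle is verifying affineness of the descended morphism $\overline\phi$, since affineness does not descend along surjections for free. I would argue locally: for an affine open $V \subset Q$ the set $\phi^{-1}(V)$ is affine (as $\phi$ is affine) and $H$-invariant, its image $\overline\phi^{-1}(V) = \pi(\phi^{-1}(V))$ is the good quotient $\phi^{-1}(V) \sslash H$, and the good quotient of an affine variety by the \emph{reductive} group $H$ is again affine; covering $Q$ by such $V$ gives affineness of $\overline\phi$. Finally, for $\mathcal{U}^{\text{pr,g}}$ one records that, when $\pi$ is geometric, the fibre computation $\overline\phi^{-1}(z) = \pi(\phi^{-1}(z))$ shows $\overline\phi$ is geometric iff $\phi$ is (a single $\widehat G$-orbit maps onto a single $G$-orbit, and conversely $\pi^{-1}$ of a $G$-orbit is a $\widehat G$-orbit), while affineness of the quotient map and projectivity of the common target $Q \cong U \sslash G$ are preserved in both directions; this upgrades the bijection to $\mathcal{U}^{\text{pr,g}}_{(X,G)} \to \mathcal{U}^{\text{pr,g}}_{(\widehat X,\widehat G)}$.
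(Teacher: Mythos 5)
Your proof is correct, and it reaches the theorem by a genuinely different technical route than the paper. The paper leans on two auxiliary results: the lemma of Ramanathan cited in its proof (an affine $\widehat G$-equivariant morphism onto a scheme admitting a good quotient yields a good quotient of the source together with an affine morphism between the quotients), applied once in each direction, and Lemma \ref{Lem:epi} (surjections onto reduced schemes are epimorphisms), which drives the diagram chases identifying $\pi^{-1}(U) \sslash \widehat G$ with $U \sslash G$. You instead verify the good-quotient axioms by hand in your ``quotients in stages'' criterion --- using normality of $H$ for the identity $(-)^{\widehat G} = ((-)^{H})^{G}$ on invariants --- and you get the identification of the two quotients for free from uniqueness of categorical quotients, with no diagram chase. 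The one place where Ramanathan's lemma does real work for the paper, namely affineness of the descended map $U \to \pi^{-1}(U) \sslash \widehat G$ in the backward (geometric) direction, is exactly the step you flag as the main obstacle, and your affine-local resolution is sound: preimages of affine opens are affine and saturated, and a good quotient of an affine variety by a reductive group is affine by categoricity plus Spec of invariants --- the same fact the paper itself invokes in the proof of Proposition \ref{Pro:Piccorr}. Both arguments share the two ingredients that cannot be avoided: the saturation observation ($\widehat U = \pi^{-1}(\pi(\widehat U))$ because fibres of a geometric quotient are $H$-orbits), which is precisely why surjectivity of $U \mapsto \pi^{-1}(U)$ holds only for geometric lifts, and the computation that preimages of $G$-orbits under $\pi$ are single $\widehat G$-orbits. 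The net trade-off: your route is longer but self-contained and makes explicit where normality of $H$ enters, while the paper's is shorter at the price of the external reference. The standard facts you use silently --- that good quotients restrict over preimages of open subsets and that good quotients are categorical quotients --- are also used silently by the paper, so they do not constitute gaps.
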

It has already been observed by various authors (see \cite[Theorem 6.1.5]{bbquotients} for an overview) that for a geometric $H$-lift $\pi: \widehat X \to X$ a $G$-invariant open subset $U \subset X$ has a good/geometric quotient by $G$ iff $\pi^{-1}(U)$ has a good/geometric quotient by $\widehat G$. Below we give a self-contained argument that also includes the case of good $H$-lifts.

The structure of the paper is as follows: as a motivation for the definition of $\mathcal{U}^{\text{pr}}_{(X,G)}$, we show in Section \ref{Sect:Linebundles} how its elements relate to $G$-linearized line bundles on $X$ for $X$ a smooth variety. This makes a connection to the classical approach of \cite{git}, obtaining (semi)stable sets from linearized line bundles. We give two special situations where we can identify the elements of $\mathcal{U}^{\text{pr,g}}_{(X,G)}$ with chambers of the $G$-effective cone in $\text{NS}^G(X)$:
\begin{itemize}
 \item for $X$ a projective homogeneous variety, $G$ reductive,
 \item for $X = \mathbb{A}^n$ and $G=T$ a torus acting linearly.
\end{itemize}
In Section \ref{Sect:Hlifts} we give the proof of Theorem \ref{Theo:lift} and describe situations where $H$-lifts appear naturally. Moreover we show that $H$-lifts where the action of $H$ on $\widehat X$ is free give an identification $\text{Pic}^G(X) \cong \text{Pic}^{\widehat G}(\widehat X)$ compatible with forming semistable sets. Finally in Section \ref{Sect:Exa} we give an explicit example, showing how to compute parts of the VGIT-decomposition of the $G$-effective cone for the componentwise action of $G=\text{PGL}_2$ on $(\mathbb{P}^1)^n$ using toric quotients.

\section*{Acknowledgements}
I want to thank Gergely B\'erczi, Brent Doran and Frances Kirwan for their advice during the preparation of this paper.

I am supported by the grant SNF-200020162928.
\section*{Conventions}
In the paper we are going to work over the complex numbers. For us, a \emph{good quotient} of the action of an algebraic group $G$ on a scheme $X$ is a morphism $p: X \to Y$ to a scheme $Y$ satisfying
\begin{enumerate}
 \item $p$ is surjective, affine and $H$-invariant,
 \item $p_*(\mathcal{O}_X^G) = \mathcal{O}_Y$, where $\mathcal{O}_X^G$ is the sheaf of $G$-invariant functions on $X$,
 \item for $Z_1, Z_2 \subset X$ closed, disjoint $G$-invariant subsets, their images $p(Z_1), p(Z_2)$ are also closed and disjoint.
\end{enumerate}
On the other hand, for $p$ to be a \emph{geometric quotient} we require the properties above, except that it is affine, but additionally we want the fibres of geometric points under $p$ to be orbits of $G$. This is the definition of \cite{git}.
\todo{Def: saturated}

\section{Motivation: Projective quotients from linearized line-bundles} \label{Sect:Linebundles}
Let $X$ be a smooth, irreducible variety with an action of a connected reductive group $G$. We want to study good quotients of open sets $U \subset X$ by $G$ which are projective varieties. 

\begin{Lem} \label{Lem:ULcorr}
 Let $X$ be a smooth, irreducible variety with an action of a connected reductive group $G$. Then the open sets $U$ in $\mathcal{U}^{\text{pr}}_{(X,G)}$ are all of the form $U=X^{ss}(L)$ for a $G$-linearized line bundle $L$ on $X$.
\end{Lem}
\begin{proof}
 By \cite[Theorem 6.1.5]{bbquotients}, all open $G$-invariant sets $U \subset X$ with a good quotient $U \sslash G$ that is a quasi-projective variety are saturated subsets of some $X^{ss}(L)$ for $L$ a $G$-linearized line bundle. Let $\pi: X^{ss}(L) \to X^{ss}(L) \sslash G$ be the corresponding good quotient. Then we have $U \sslash G \subset X^{ss}(L) \sslash G$ contained as an open subset. If $U \sslash G$ is in addition projective, this inclusion is an isomorphisms. But as $U$ is a saturated open subset, we have $U=\pi^{-1}(\pi(U))=\pi^{-1}(U \sslash G) = X^{ss}(L)$ as claimed.
\end{proof}
\begin{Rmk}
 We can generalize the setting above to $X$ being a normal variety if we work with $G$-linearized Weil divisors instead of line bundles, as described in \cite{hausenweil}. However, as our applications work with smooth $X$, we stay in the more classical setting of line bundles.
\end{Rmk}

An advantage of the condition ``$U \sslash G$ is projective'' in comparision with ``$U$ is maximal with respect to saturated inclusion in $\mathcal{U}_{(X,G)}$'' is that this can be verified intrinsically only from the action of $G$ on $U$ (without reference to the ambient variety $X$). Thus the definition of $\mathcal{U}^{\text{pr}}$ is compatible with the restriction to open $G$-invariant subsets in the following sense.
\begin{Cor} \label{Cor:chamberopen}
 Let $X$ be a variety with an action of a reductive group $G$. Let $U_0 \subset X$ be an open $G$-invariant subset. Then we have $\mathcal{U}^{\text{pr}}_{(U_0,G)} = \mathcal{U}^{\text{pr}}_{(X,G)} \cap \{U: U \subset U_0\}$ (similarly for $\mathcal{U}^{\text{pr,g}}$).
\end{Cor}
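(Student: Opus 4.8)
The plan is to observe that membership in $\mathcal{U}^{\text{pr}}_{(\cdot,G)}$ is an intrinsic property of the $G$-variety $U$ together with its $G$-action, and makes no reference to the ambient variety. Both sides of the claimed equality are collections of subsets $U$ satisfying: $U$ is nonempty, open, $G$-invariant, contained in $U_0$, and admits a good quotient $U \to U\sslash G$ with $U\sslash G$ projective. The only discrepancy between the two definitions is whether ``open and $G$-invariant'' is read inside $U_0$ (for the left-hand side $\mathcal{U}^{\text{pr}}_{(U_0,G)}$) or inside $X$ (for the right-hand side). So I would reduce the statement to checking that these two readings coincide for subsets $U \subset U_0$.

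First I would record the topological point: since $U_0$ is open in $X$, a subset $U \subset U_0$ is open in the subspace topology of $U_0$ if and only if it is open in $X$. Indeed, if $U$ is open in $U_0$ then $U = V \cap U_0$ for some $V$ open in $X$, hence $U$ is open in $X$; conversely, if $U$ is open in $X$ and contained in $U_0$, then it is automatically open in $U_0$. Next, the action of $G$ on $U_0$ is by definition the restriction of the action on $X$, so a subset $U \subset U_0$ is $G$-invariant for the $U_0$-action precisely when it is $G$-invariant for the $X$-action. Hence the underlying set of admissible $U$'s is literally the same on both sides.

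Finally, I would note that the remaining conditions---existence of a good quotient $U \to U\sslash G$ and projectivity of the target---are formulated purely in terms of the scheme $U$ and the $G$-action on it: the quotient morphism is a morphism of schemes satisfying properties (1)--(3) of the Conventions, none of which mention $X$ or $U_0$. Therefore these conditions hold or fail simultaneously, independently of the ambient variety, and the two collections agree. The identical argument, supplemented by the requirements that the quotient be affine and geometric, gives the corresponding statement for $\mathcal{U}^{\text{pr,g}}$.

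I do not expect a genuine obstacle here: the content is entirely an unwinding of definitions, and the only point that must be stated explicitly is the elementary topological fact that, for subsets of an open set $U_0$, openness in $U_0$ coincides with openness in $X$.
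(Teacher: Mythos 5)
Your proposal is correct and is essentially the paper's own argument: the paper gives no separate proof, deriving the corollary from the observation immediately preceding it that membership in $\mathcal{U}^{\text{pr}}_{(\cdot,G)}$ (and $\mathcal{U}^{\text{pr,g}}_{(\cdot,G)}$) is intrinsic to the $G$-action on $U$, which is exactly what you unwind. Your explicit check that openness and $G$-invariance relative to $U_0$ and to $X$ coincide for $U \subset U_0$ is the only point of substance, and you handle it correctly.
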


\todo{Hilbert-Mumford implies this is a subcone, doesn't it? For every point in the complement, we have the condition that it is unstable, this is a half-space on the set of line bundles.}

In the following two subsections, we are going to present situations where the dependence of $X^{ss}(L)$ of $L$ has been studied before and where the classes of $G$-linearized line bundles are partitioned into cones on which $X^{ss}(L)$ is constant. 
\subsection{Actions of reductive groups on smooth projective varieties} \label{Sect:projVGIT}
Let $X$ be an irreducible, smooth projective variety acted upon by a connected, reductive linear algebraic group $G$. In this situation, Dolgachev and Hu defined in \cite{dolgachevhu} the $G$-ample cone $C^G(X) \subset \text{NS}^G(X)_\mathbb{R}$ inside the Neron-Severi group of $G$-linearized line bundles. It is spanned by the homology classes of $G$-linearized ample line bundles $L$ such that $X^{ss}(L) \neq \emptyset$. In \cite[Theorem 3.3.2]{dolgachevhu}, they show that  if this cone has nonempty interior, it contains open chambers such that two elements $L,L' \in C^G(X)$ are in the same chamber $\sigma$ iff we have 
\[ X^{ss}(L) = X^s(L) = X^{s}(L') = X^{ss}(L')=:X^{ss}(\sigma).\]
Furthermore, as $X$ is projective, for any $L$ in a chamber as above, we have that $X^{ss}(L) \sslash G$ is projective. This shows that the set of chambers of $C^G(X)$ injects into $\mathcal{U}^{\text{pr,g}}_{(X,G)}$ by sending a chamber  $\sigma$ to  $X^{ss}(\sigma)$. Note however, that this inclusion can be strict: in \cite{bbexotic} Bia{\l}ynicki-Birula and {\'S}wi{\polhk{e}}cicka give an example of a smooth projective variety $X$ with an action of a torus $T$ together with an open set $U \subset X$, which has a projective geometric quotient $U \sslash T$ but is not of the form $X^s(L)$ for $L$ ample and $G$-linearized. For a treatment of the behaviour of $X^{ss}(L)$ when $L$ is outside the ample cone see \cite{beyondample}.

However, for certain special varieties $X$ the correspondence between chambers of $C^G(X)$ and elements of $\mathcal{U}^{\text{pr,g}}_{(X,G)}$ is bijective.
\begin{Pro} \label{Pro:Xsmoothproj}
 Let $X$ be an irreducible, smooth projective variety acted upon by a connected, reductive linear algebraic group $G$. Assume that every effective divisor is semiample (i.e. some positive power is base-point free) and that $C^G(X)$ has nonempty interior with all walls having positive codimension.
%
%
 Then the chambers of $C^G(X)$ are in bijection with $\mathcal{U}^{\text{pr,g}}_{(X,G)}$ via $\sigma \mapsto X^{ss}(\sigma)$.
\end{Pro}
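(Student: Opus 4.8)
The injection $\sigma \mapsto X^{ss}(\sigma)$ is already built into the setup: by the characterization of chambers recalled above, two ample linearizations share the same (stable $=$ semistable) locus exactly when they lie in the same chamber, so the map is well defined and injective. The whole content is therefore surjectivity, and the plan is to show that every $U \in \mathcal{U}^{\text{pr,g}}_{(X,G)}$ arises from a chamber. Given such a $U$, Lemma \ref{Lem:ULcorr} lets me write $U = X^{ss}(L)$ for some $G$-linearized line bundle $L$. As $U \neq \emptyset$, a positive power of $L$ carries a nonzero $G$-invariant section, so $L$ is effective and hence, by hypothesis, semiample; in particular a high power of $L$ is globally generated and defines a $G$-equivariant morphism to projective space, through which $X^{ss}(L)$ is cut out by the Hilbert--Mumford numerical criterion exactly as for an ample linearization (cf. \cite{beyondample}). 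Finally, since the quotient is geometric, every semistable point is stable, so $X^{ss}(L) = X^s(L) = U$.

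The idea is now to push $L$ into the $G$-ample cone without changing the semistable locus. Using that $C^G(X)$ has nonempty interior, I fix a $G$-linearized ample bundle $A$ with $X^{ss}(A) \neq \emptyset$ and set $L_t = L + tA$, which is ample for every $t > 0$. The Mumford weight is linear in the linearization, $\mu^{L_t}(x,\lambda) = \mu^L(x,\lambda) + t\,\mu^A(x,\lambda)$, and I want to show that for all sufficiently small $t > 0$ one has $X^{ss}(L_t) = X^s(L_t) = U$. Granting this, $[L_t]$ is an ample class with nonempty, coinciding stable and semistable loci, so $[L_t] \in C^G(X)$; since the walls are exactly the positive-codimension locus on which stability and semistability differ, $[L_t]$ lies in an honest chamber $\sigma$, and $X^{ss}(\sigma) = X^{ss}(L_t) = U$, which is what surjectivity requires.

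The main obstacle is making this perturbation uniform in the point, and this is where the two compactness hypotheses enter. Pointwise the claim is clear: stability is an open condition and, by the displayed linearity, a strictly (semi)stable value of $\mu^L$ at $x$ survives after adding $t\,\mu^A$ for $t$ small depending on $x$. To obtain a single $t$ I would work with the normalized Kempf function $M^L(x) = \inf_\lambda \mu^L(x,\lambda)/\|\lambda\|$, which is $G$-invariant and whose sign detects (in)stability. On the unstable locus $X \setminus U$, which is closed in the projective variety $X$ and hence compact, $M^L$ is bounded above by some $-\delta_1 < 0$; since $X$ is projective the weights of the fixed bundle $A$ satisfy $|M^A| \le C$ uniformly, so $M^{L_t} \le M^L + tC < 0$ there for small $t$, giving $X^{ss}(L_t) \subseteq U$. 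For the reverse inclusion $U \subseteq X^s(L_t)$ I would use that $M^L$ is $G$-invariant and so descends to the \emph{projective}, hence compact, quotient $U \sslash G$, where positivity of $M^L$ should force a uniform lower bound $\delta_2 > 0$; then $M^{L_t} \ge M^L - tC > 0$ on $U$ for $t < \delta_2/C$. The delicate point to get right is precisely the semicontinuity of $M^L$ underlying these two bounds---the Kempf weight is not monotone under specialization---for which I would lean on Kempf's and Hesselink's results on optimal destabilizing one-parameter subgroups, or else bypass the hands-on estimate by invoking the extension of the variation-of-GIT wall-and-chamber structure past the ample cone established in \cite{beyondample}.
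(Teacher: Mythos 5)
Your skeleton is the same as the paper's: reduce via Lemma \ref{Lem:ULcorr} to $U = X^{ss}(L) = X^s(L)$ with $L$ effective hence semiample, then perturb $L$ by an ample class into the $G$-ample cone (your $L + tA$ is, up to rescaling, the paper's $L^{\otimes m}\otimes L_0$). But the two points you defer to black boxes are exactly where the content of the proof lies, and as written both are genuine gaps. The first is your assertion that for a merely semiample $L$ the (semi)stable locus is "cut out by the Hilbert--Mumford numerical criterion exactly as for an ample linearization." The numerical criterion is proved in \cite{git} and \cite{dolgachevhu} only for ample linearizations, and the reduction along the morphism defined by a power of $L$ is not innocuous (positive-dimensional fibres of that morphism interfere with closedness of orbits and finiteness of stabilizers, which enter Mumford's definition of stability). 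What your perturbation actually requires is the pointwise strict inequality (in the paper's sup-convention, $M^L(x)<0$) at every stable $x$, and this is the technical heart of the paper's argument: it uses that for semiample $L$ the supremum defining $M^L(x)$ is \emph{attained} by some $\lambda$ (\cite[Corollary 1]{semiample}), and then shows $\mu^L(x,\lambda)<0$ for every $\lambda$ by taking an invariant section $s$ with $Gx$ closed in $X_s$ and proving that the limit $z=\lim_{t\to 0}\lambda(t)x$ must leave $X_s$ (otherwise $\lambda(\mathbb{C}^*)$ would lie in a stabilizer, contradicting proper stability), whence $s(z)=0$ and \cite[Proposition 1]{semiample} applies. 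Nothing in your text supplies this step.

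The second gap is uniformity in $x$. Your compactness argument presupposes continuity (or at least the right semicontinuity) of the Kempf function, which fails, as you acknowledge; but your fallbacks do not close the hole: Kempf--Hesselink theory is developed for ample linearizations, so invoking it for $L$ requires precisely the semiample reduction you have not carried out, and "invoking the wall-and-chamber structure beyond the ample cone" is uncomfortably close to assuming the proposition. The paper sidesteps all of this with a purely algebraic device: the sets $Y_m = X^s(L)\setminus X^s(L^{\otimes m}\otimes L_0)$ form a descending chain of closed subsets of $X^s(L)$ which every point eventually leaves (by the pointwise estimate and subadditivity $M^{L^{\otimes m}\otimes L_0}(x)\le m M^L(x)+M^{L_0}(x)$), so Noetherian induction yields a single $m$ working for all $x$. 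Note also that you are proving more than necessary: the paper establishes only the inclusion $U\subseteq X^s(L')$, because the reverse inclusion is then free by the saturation-plus-projectivity argument of Lemma \ref{Lem:ULcorr} ($U\sslash G$ is projective and sits as an open subscheme of the irreducible $X^s(L')\sslash G$, hence equals it, and saturation gives $U=X^s(L')$). Dropping your inclusion $X^{ss}(L_t)\subseteq U$ would eliminate the entire estimate on the unstable locus. Finally, the paper does not derive chamber membership a posteriori from $X^{ss}=X^s$ as you do; it chooses $L_0$ so that $L+rL_0$ lies in a chamber for all small $r>0$, which is where the hypothesis that there are finitely many walls, all of positive codimension, is actually used.
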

\begin{proof}
 Let $U \in \mathcal{U}^{\text{pr,g}}_{(X,G)}$ then we need to show that $U$ is of the form $U=X^s(L') = X^{ss}(L')$ for some ample $G$-linearized line bundle $L'$. By Lemma \ref{Lem:ULcorr} a priori we only know that $U=X^{ss}(L)$ for some $G$-linearized (not necessarily ample) $L$. As $U \to U\sslash G$ is a geometric quotient, all orbits in $U$ are fibres of this map and hence closed, so $U=X^s(L)=X^{ss}(L)$. As $U$ is nonempty, the bundle $L$ must have at least one section, so its associated divisor is effective and thus semiample by assumption.
 
 We want to show that for $m$ sufficiently large and a suitable $L_0$ in the interior of $C^G(X)$ the line bundle $L'=L^{\otimes m} \otimes L_0$ satisfies $U \subset X^s(L')=X^{ss}(L')$. Then as in the proof of Lemma \ref{Lem:ULcorr} we see that this inclusion is already the identity. But as such $L'$ are ample and $G$-effective, this finishes the proof. In the following, we can use the Hilbert-Mumford criterion to determine the (semi)stable points of $L'$. 
 
 For this recall from \cite[Section 1.1]{dolgachevhu} the construction of the function $M^\bullet(x): \text{Pic}^G(X)_{\mathbb{R}} \to \mathbb{R}$ for $x \in X$. To define it let $\lambda : \mathbb{C}^* \to G$ be a $1$-parameter subgroup of $G$ then, as $X$ is proper, the map $\mathbb{C}^* \to X, t \mapsto \lambda(t).x$ has a limit $z$ over $t=0$. The point $z$ is fixed by $\lambda$ and for $L \in \text{Pic}^G(X)$, $\mathbb{C}^*$ acts on the fibre $L_z$ of $L$ over $z$ with weight $r=:\mu(x,\lambda)$. Let $T$ be a maximal torus in $G$ and let $\|\ \|$ be a Weyl-invariant norm on the group of $1$-parameter subgroups of $T$ tensor $\mathbb{R}$. Then for any $1$-parameter subgroup $\lambda$ of $G$ define $\| \lambda \|$ to be the norm of a suitable conjugate of $\lambda$ contained in $T$. We set
 \[M^L(x) = \sup_{\lambda \text{ 1-PSG of }G} \frac{\mu^L(x,\lambda)}{\|\lambda\|}.\]
 By \cite[Lemma 3.2.5.]{dolgachevhu} the function $M^\bullet(x)$ factors through $\text{NS}^G(X)_\mathbb{R}$ and satisfies
 \[M^{L_1 + L_2}(x) \leq M^{L_1}(x) + M^{L_2}(x), M^{m L}(x) = m M^{L}(x)\]
 for $L_1, L_2 \in \text{Pic}^G(X)$, $m>0$.
 For $L'$ ample $G$-linearized, $x$ is semistable (properly stable) with respect to $L'$ iff $M^{L'}(x) \leq 0$ ($M^{L'}(x) < 0$). 
 
 For our given $L$, we first show that $M^L(x)<0$ for all $x \in X^s(L)$. Observe that as $L$ is semiample, by \cite[Corollary 1]{semiample} there exists a $1$-parameter subgroup $\lambda$ of $G$ with $M^L(x) = \mu^L(x,\lambda)/\|\lambda\|$. Thus it suffices to show $\mu^L(x,\lambda) <0$ for all $1$-parameter subgroups $\lambda$. As $x$ is stable, there exists an invariant section $s$ of some tensor power of $L$ with $x \in X_s$ and $Gx \subset X_s$ closed. We claim that then $z=\lim_{t \to 0} \lambda(t)x$ is not contained in $X_s$. Indeed assume otherwise, then $z\in \overline{Gx} \cap X_s = Gx$, so $z=gx$ for some $g \in G$. However then the stabilizer $G_{gx}$ contains all of $\lambda(\mathbb{C}^*)$, so it is not finite. But then also the stabilizer of $x$ is not finite and we obtain a contradiction, as our assumptions imply that all stable points are properly stable\footnote{In \cite{dolgachevhu} finiteness of stabilizers was part of the definition of a stable point.}. We conclude that $s(z)=0$ and by \cite[Proposition 1]{semiample} this implies $\mu(x,\lambda)<0$. 
 
 Let $L_0 \in C^G(X)$ such that for all $0 < r \ll 1$ we have that $L + rL_0$ is contained in a chamber of $C^G(X)$. As there are only finitely many walls (\cite[Theorem 3.3.3]{dolgachevhu}), which are all of positive codimension, such $L_0$ exist. For $m \gg 0$ an integer, we have that $L'=L^{\otimes m} \otimes L_0$ is ample and for a fixed $x \in X^s(L)$ we know
 \[M^{L'}(x) \leq m M^{L}(x) + M^{L_0}(x).\]
 As $M^L(x)<0$ we can choose $m$ sufficiently big such that $M^{L'}(x)<0$ and hence $x \in X^s(L')$. The subsets
 \[Y_m = X^s(L) \setminus X^s(L^{\otimes m} \otimes L_0)\]
 form a descending chain of closed subsets of $X^s(L)$ and for all $x \in X^s(L)$ there exists $m$ with $x \notin Y_m$. By Noetherian induction we can thus choose $m_0$ such that $U=X^s(L) \subset X^s(L^{\otimes m} \otimes L_0)$ for all $m \geq m_0$. But by the choice of $L_0$ we have that $L'=L^{\otimes m} \otimes L_0$ is contained in a chamber of $C^G(X)$ for $m$ sufficiently large.
\end{proof}
The condition that every effective divisor $D$ is semiample is for instance satisfied for homogeneous projective varieties $X=G/P$. Indeed, in this case 
\[G \mapsto \text{Pic}(X), g \mapsto \mathcal{O}(g.D),\]
where $g.D$ is the translate of $D$ by $g$, is a family of line bundles over $G$. By \cite[Proposition 7]{popov}, $\text{Pic}(X)$ is discrete and hence the map above is constant and equal to $\mathcal{O}(D)$. But as the $G$-translates of $X \setminus D$ cover $X$, this shows that $\mathcal{O}(D)$ is base-point free.

\todo{ Mention: \\
The criterion is also satisfied for smooth projective curves (by Riemann-Roch) and abelian varieties (by the Theorem of the Square). 
?}

\subsection{Toric quotients of affine space} \label{Sect:toric}
In this section, we explain how for linear actions $(\mathbb{C}^*)^n \curvearrowright \mathbb{C}^r$ we can compute open sets in $\mathcal{U}_{(\mathbb{C}^r,(\mathbb{C}^*)^n)}$ via elementary and algorithmically accessible operations involving fans and polyhedra. We closely follow \cite[Section 14]{toric} in notation and presentation.

Let an algebraic torus $G=(\mathbb{C}^*)^{n}$ act faithfully, linearly on the affine space $X=\mathbb{C}^r$. By a suitable change of coordinates, we may assume that $G$ acts by diagonal matrices. For $\textbf t = (t_1, \ldots, t_n) \in G$ and $\beta = (\beta^1, \ldots, \beta^n) \in \mathbb{Z}^n$ write 
\[\textbf t^\beta = t_1^{\beta^1} t_2^{\beta^2} \cdots t_n^{\beta^n}.\]
Then after coordinate change, the action of $\textbf t \in G$ on $x \in \mathbb{C}^r$ is given by
\[\textbf t.x = \text{diag}(\textbf t^{\beta_1}, \ldots, \textbf t^{\beta_r}) x\]
for integer vectors $\beta_1, \ldots, \beta_r \in \mathbb{Z}^n$. 
Note that via the identification of $\mathbb{Z}^n$ with the character group $\widehat G$ of $G$, the $\beta_i$ are simply the restrictions of the characters $\textbf{t} \mapsto t_i$ of $(\mathbb{C}^*)^r$ along the map $(\mathbb{C}^*)^n \to (\mathbb{C}^*)^r \subset \text{GL}(\mathbb{C}^r)$ specifying the action. Let 
\[\gamma: \mathbb{Z}^r \cong \widehat{(\mathbb{C}^*)^r} \to \widehat G \cong \mathbb{Z}^n\]
be this restriction map (such that $\gamma(e_i)=\beta_i$). The assumption that the action is faithful implies that $\gamma$ is surjective (\cite[Lemma 14.2.1]{toric}). Let $\delta: M \to \mathbb{Z}^r$ be the kernel of $\gamma$. Setting $N=\Hom(M,\mathbb{Z})$, the map $\delta$ is given by 
\[\delta(m)=(\langle m, \nu_1 \rangle, \ldots, \langle m, \nu_r \rangle)\]
for some $\nu_1, \ldots, \nu_r \in N$. Below we will see that the vectors $\beta_1, \ldots, \beta_r$ control the linearizations and GIT-chambers for quotients of $\mathbb{C}^r$ by $G$ and these quotients are toric varieties of fans in $N_\mathbb{R}=N \otimes_{\mathbb{Z}} \mathbb{R}$ with rays spanned by some of the vectors $\nu_1, \ldots, \nu_r$. 

For this note that, as all line bundles on $\mathbb{C}^r$ are trivial, the $G$-linearized line bundles $L=\mathbb{C}^r \times \mathbb{C} \to \mathbb{C}^r$ are specified by characters $\chi \in \widehat G$ via
\[\textbf t . (x,y) = (\textbf t.x, \chi(\textbf t) y),\text{ with }\textbf t \in G, x \in \mathbb{C}^r, y \in \mathbb{C}.\]
Denote by $(\mathbb{C}^r)^{ss}_{\chi}, (\mathbb{C}^r)^{s}_{\chi}$ the (semi)stable points with respect to these linearizations and by $\mathbb{C}^r \sslash_\chi G$ the categorical quotient of $(\mathbb{C}^r)^{ss}_{\chi}$ by $G$. 
Let $\widehat G_\mathbb{R} = \widehat G \otimes_{\mathbb{Z}} \mathbb{R}$ and let $C_\beta \subset \widehat G_\mathbb{R}$ be the cone spanned by $\beta_1 \otimes 1, \ldots, \beta_r \otimes 1$. Note that as $\gamma$ was surjective, we have $\dim\ C_\beta = \dim\ \widehat G_\mathbb{R}$. Then we have the following results:
\begin{enumerate}
 \item The set $(\mathbb{C}^r)^{ss}_{\chi}$ of semistable points is nonempty iff $\chi \otimes 1 \in C_\beta$. (\cite[Proposition 14.3.5]{toric})
 \item The set $(\mathbb{C}^r)^{s}_{\chi}$ of stable points is nonempty iff $\chi \otimes 1$ is in the interior of $C_\beta$. (\cite[Proposition 14.3.5]{toric})
 \item The quotient $\mathbb{C}^r \sslash_\chi G$ is projective for some $\chi \otimes 1 \in C_\beta$ iff all $\beta_i$ are nonzero and $C_\beta$ is strongly convex (i.e. $C_\beta \cap (- C_\beta) = \{0\}$). In this case all nonempty quotients $\mathbb{C}^r \sslash_\chi G$ are projective. (\cite[Proposition 14.3.10]{toric})
 \item We have $(\mathbb{C}^r)^{s}_{\chi} = (\mathbb{C}^r)^{ss}_{\chi}$ iff $\chi \otimes 1$ does not lie on a cone $C_{\beta'}$ generated by a subset $\beta'$ of the $\beta_i$ with $\dim\ C_{\beta'} < \dim\ C_{\beta}$. (\cite[Theorem 14.3.14]{toric})
\end{enumerate}
In fact the behaviour of $(\mathbb{C}^r)^{ss}_{\chi}$ as $\chi \otimes 1$ varies in $C_\beta$ is completely determined by the so-called secondary fan $\Sigma_{\text{GKZ}}$ (see \cite[Theorem 14.4.7]{toric}). This is a rational fan in $\widehat G_\mathbb{R}$ with support $C_\beta$ such that $(\mathbb{C}^r)^{ss}_{\chi}$ is constant for $\chi \otimes 1$ moving in the relative interior of any of the cones $\sigma \in \Sigma_{\text{GKZ}}$. 

Let $\chi \in \widehat G \cap C_\beta$ be given by $\chi = \sum_{i=1}^r a_i \beta_i$, i.e. $\chi=\gamma(\textbf{a})$. Define the polyhedron
\[P_{\textbf a} = \{m \in \mathbb{M}_\mathbb{R}: \langle m, \nu_i \rangle \geq -a_i, 1 \leq i \leq r\} \subset M_\mathbb{R}= M \otimes_{\mathbb{Z}} \mathbb{R}.\]
Let $\Sigma_\chi$ be the normal fan of $P_{\textbf a}$ (see \cite[Proposition 14.2.10]{toric}), then it is independent of the choice of ${\textbf a} \in \gamma^{-1}(\chi)$ and we have that the quotient $\mathbb{C}^r \sslash_{\chi} G$ is isomorphic to the toric variety associated to $\Sigma_\chi$ (\cite[Theorem 14.2.13]{toric}).  

Moreover, we have an explicit description of the set $(\mathbb{C}^r)^{ss}_{\chi}$ of semistable points. Set 
\[I_{\emptyset,\chi} = \{i \in \{1, \ldots, r\}: P_{\textbf a} \cap \{m: \langle m, \nu_i \rangle = -a_i\} = \emptyset \}.\]
Define the ideal
\[B(\Sigma_\chi, I_{\emptyset,\chi}) = \left(\prod_{i \notin I_{\emptyset,\chi}: \nu_i \notin \sigma} x_i: \sigma \in \Sigma_\chi \right) \cdot \left( \prod_{i \in I_{\emptyset,\chi}} x_i \right)\]
in $\mathbb{C}[x_1, \ldots, x_r]$. Then $(\mathbb{C}^r)^{ss}_{\chi} = \mathbb{C}^r \setminus V(B(\Sigma_\chi, I_{\emptyset,\chi}))$ (\cite[Corollary 14.2.22]{toric}).

In fact, the fan $\Sigma_\chi$ and the set $I_{\emptyset, \chi}$ of indices is also constant on the relative interior of the cones of $\Sigma_{\text{GKZ}}$ and these cones are uniquely indexed by this data and written as $\Gamma_{\Sigma, I_\emptyset}$.

\begin{Pro} \label{Pro:toriccorr}
 The map $\Sigma_{\text{GKZ}} \to \mathcal{U}_{(\mathbb{C}^r,G)}$ associating to a cone $\Gamma_{\Sigma, I_\emptyset}$ the set $(\mathbb{C}^r)^{ss}_{\chi}$ for any $\chi \otimes 1$ in the relative interior of $\Gamma_{\Sigma, I_\emptyset}$ is well-defined and injective. 
 
 If all vectors $\beta_i$ are nonzero and the cone $C_\beta$ is strongly convex, the map above is a bijection from $\Sigma_{\text{GKZ}}$ to $\mathcal{U}^{\text{pr}}_{(\mathbb{C}^r,G)}$ sending chambers to elements of $\mathcal{U}^{\text{pr},g}_{(\mathbb{C}^r,G)}$.
 
 Conversely, every $U \in \mathcal{U}_{(\mathbb{C}^r,G)}$ is a saturated open set of some $(\mathbb{C}^r)^{ss}_{\chi}$. 
\end{Pro}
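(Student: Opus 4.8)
The plan is to handle the three assertions in turn, using only the toric dictionary recalled above (facts (1)--(4) and the properties of the secondary fan $\Sigma_{\text{GKZ}}$) together with Lemma~\ref{Lem:ULcorr}. For well-definedness I would first note that $(\mathbb{C}^r)^{ss}_\chi$ depends only on which cone $\Gamma_{\Sigma,I_\emptyset}$ has $\chi\otimes 1$ in its relative interior; such an integral $\chi$ exists since the cones are rational. As $\chi\otimes 1\in C_\beta=\mathrm{supp}\,\Sigma_{\text{GKZ}}$, fact (1) gives $(\mathbb{C}^r)^{ss}_\chi\neq\emptyset$, and GIT supplies a good quotient, so the image lands in $\mathcal{U}_{(\mathbb{C}^r,G)}$. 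For injectivity I would recover the indexing data $(\Sigma,I_\emptyset)$ from the set $U=(\mathbb{C}^r)^{ss}_\chi$ itself: from the factorization of $B(\Sigma_\chi,I_{\emptyset,\chi})$ as $B(\Sigma_\chi)\cdot\prod_{i\in I_{\emptyset,\chi}}x_i$ one reads off $I_\emptyset$ as exactly the set of indices $i$ for which $x_i$ is nowhere vanishing on $U$ (using that the facet attached to $\nu_i$ is nonempty precisely when $i\notin I_\emptyset$), and the combinatorics of the coordinate subspaces comprising $\mathbb{C}^r\setminus U$ then pins down which subsets of the fixed rays $\nu_i$ span cones, hence $\Sigma$. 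Since the cones $\Gamma_{\Sigma,I_\emptyset}$ are uniquely indexed by this data, distinct cones yield distinct $U$, giving injectivity.

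For the second assertion I would combine Lemma~\ref{Lem:ULcorr} with the triviality of all line bundles on $\mathbb{C}^r$: every $U\in\mathcal{U}^{\text{pr}}_{(\mathbb{C}^r,G)}$ is of the form $U=X^{ss}(L)=(\mathbb{C}^r)^{ss}_\chi$ for a character $\chi$, and nonemptiness forces $\chi\otimes 1\in C_\beta$ by fact (1), so $\chi\otimes 1$ lies in the relative interior of a unique cone of $\Sigma_{\text{GKZ}}$ mapping to $U$; this gives surjectivity, and with the injectivity above a bijection. That each cone maps \emph{into} $\mathcal{U}^{\text{pr}}$ is exactly where the hypotheses ($\beta_i\neq 0$, $C_\beta$ strongly convex) enter, via fact (3), which makes all nonempty quotients projective. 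For the statement on chambers I would take $\chi\otimes 1$ in the relative interior of a full-dimensional cone: it then avoids every proper subcone $C_{\beta'}$, so fact (4) gives $(\mathbb{C}^r)^s_\chi=(\mathbb{C}^r)^{ss}_\chi$, the quotient is geometric, and together with projectivity we land in $\mathcal{U}^{\text{pr,g}}_{(\mathbb{C}^r,G)}$; the reverse inclusion is symmetric, since a geometric quotient forces the orbits in $U$ to be closed and hence $\chi$ off all walls.

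The third assertion is where I expect the real difficulty. The natural route is to invoke \cite[Theorem 6.1.5]{bbquotients}: once one knows the good quotient $U\sslash G$ is quasi-projective, that theorem exhibits $U$ as a saturated subset of some $X^{ss}(L)$, which as above equals $(\mathbb{C}^r)^{ss}_\chi$. The main obstacle is therefore to secure the quasi-projectivity of $U\sslash G$ for an \emph{arbitrary} $U\in\mathcal{U}_{(\mathbb{C}^r,G)}$. This is genuinely delicate for torus actions, since good (even geometric and affine) quotients of open subsets of affine space can in principle be complete but non-projective. I would attempt to rule this out using the special geometry at hand --- $U$ is quasi-affine and $G$ is a torus --- reducing via the local structure of good quotients to the toric GIT description, so that $U\sslash G$ is covered by the affine pieces governed by $\Sigma_{\text{GKZ}}$ and is forced into the projective-over-affine world; verifying that no non-polytopal (complete, non-projective) quotient arises is precisely the step that demands the most care, and it is the hinge on which the ``Conversely'' statement turns.
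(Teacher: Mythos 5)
Your treatment of the first two assertions is correct and essentially the paper's own argument. For injectivity the paper proceeds a little differently: it observes that the vanishing ideal of $\mathbb{C}^r\setminus U$ is the radical of $B(\Sigma,I_\emptyset)$, that ideals generated by square-free monomials are already radical, hence $B(\Sigma,I_\emptyset)=B(\Sigma',I_\emptyset')$, and then cites \cite[Corollary 14.4.15]{toric} to conclude equality of the cones; your direct reconstruction of the data from $U$ (reading off $I_\emptyset$ as the coordinates vanishing nowhere on $U$, and recovering the maximal cones of $\Sigma$ as the maximal sets of $\nu_i$ lying in a common cone, detected by which coordinate-subspace points survive in $U$) is a valid substitute that in effect reproves that corollary. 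Your second paragraph (surjectivity via Lemma~\ref{Lem:ULcorr} plus triviality of $\text{Pic}(\mathbb{C}^r)$, projectivity from the hypotheses via fact (3), chambers landing in $\mathcal{U}^{\text{pr,g}}_{(\mathbb{C}^r,G)}$ via fact (4)) is exactly the paper's argument, and indeed slightly more complete, since the paper never spells out the chamber statement.

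The gap is in the third assertion, and you have located it precisely --- but you do not close it, and you should know that the paper does not either: its entire proof of the ``Conversely'' part is the sentence that this is ``again Lemma~\ref{Lem:ULcorr}''. The input to that lemma's proof is \cite[Theorem 6.1.5]{bbquotients}, which applies only when the good quotient $U\sslash G$ is \emph{quasi-projective}, whereas $\mathcal{U}_{(\mathbb{C}^r,G)}$ only requires a quotient in schemes. Your worry about complete non-projective quotients is not hypothetical and cannot be argued away: take a smooth complete non-projective toric threefold $X_\Sigma$ with $r$ rays; the Cox construction realizes $X_\Sigma$ as a geometric (even free) quotient of an open set $U\subset\mathbb{C}^r$ under a faithful diagonal action of $(\mathbb{C}^*)^{r-3}$, with affine quotient map, so $U\in\mathcal{U}_{(\mathbb{C}^r,(\mathbb{C}^*)^{r-3})}$. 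Such a $U$ cannot be saturated in any $(\mathbb{C}^r)^{ss}_\chi$, since a saturated open inclusion would exhibit $U\sslash G\cong X_\Sigma$ as an open subscheme of $\mathbb{C}^r\sslash_\chi G$, which is projective over the affine quotient and hence quasi-projective --- a contradiction. So the step you flag as ``the hinge'' is not merely delicate: the statement holds only under the additional hypothesis that $U\sslash G$ is quasi-projective (the reading under which the citation of Lemma~\ref{Lem:ULcorr} is legitimate), and no amount of care in your proposed reduction could have established it in the generality claimed.
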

\begin{proof}
 We have already remarked that in the relative interior of $\sigma$, the set $(\mathbb{C}^r)^{ss}_{\chi}$ is constant, so we show injectivity. Assume that $\Gamma_{\Sigma, I_\emptyset}$ and $\Gamma_{\Sigma', I_\emptyset'}$ map to the same set $U$ of semistable points. Then the vanishing ideal $I$ of $\mathbb{C}^r \setminus U$ is the radical of the ideals $B(\Sigma, I_\emptyset), B(\Sigma', I_\emptyset')$ as defined above. But as these two are ideals generated by square-free monomials, they are already radical (see for instance \cite[Corollary 1.2.5]{monomial}), so $B(\Sigma, I_\emptyset) = B(\Sigma', I_\emptyset')$. By \cite[Corollary 14.4.15]{toric} this implies $\Gamma_{\Sigma, I_\emptyset} = \Gamma_{\Sigma', I_\emptyset'}$. 
 
 The additional conditions on the $\beta_i$ guarantee that all quotients $(\mathbb{C}^r)^{ss}_\chi \sslash G$ for $\chi \otimes 1 \in C_\beta$ are projective. Assume conversely that we have $U\in \mathcal{U}^{\text{pr,g}}_{(\mathbb{C}^r,(\mathbb{C}^*)^n)}$, then by Lemma \ref{Lem:ULcorr} it is of the form $U=X^{ss}(L)$ for $L$ a $G$-linearized line bundle corresponding to the character $\chi$ of $G$. As this set is nonempty, we have $\chi \otimes 1 \in C_\beta$ and as the fan $\Sigma_{\text{GKZ}}$ has support $C_\beta$, it is contained in the relative interior of one of its cones.
 
 The last statement above is again Lemma \ref{Lem:ULcorr}. 
\end{proof}

\section{Properties of \texorpdfstring{$H$}{H}-lifts} \label{Sect:Hlifts}

%
%

We are now ready to prove Theorem \ref{Theo:lift}. For this, we need the following technical result, which we prove here in lack of a good reference.

\begin{Lem} \label{Lem:epi}
 Let $\pi: Z \to X$ be a surjective morphism of schemes with $X$ reduced. Then $\pi$ is an epimorphism, i.e. two maps $\varphi_1, \varphi_2 : X \to Y$ to some scheme $Y$ agree iff $\varphi_1 \circ \pi = \varphi_2 \circ \pi$. In particular, good quotients of reduced schemes are epimorphisms.
\end{Lem}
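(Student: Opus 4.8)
The plan is to separate the statement into its trivial ``only if'' direction and the substantive ``if'' direction, and to reduce the latter to a single sheaf-theoretic fact: that for a surjective $\pi$ onto a reduced base the comorphism is injective on sections. The ``only if'' direction is immediate, since equal morphisms remain equal after precomposing with $\pi$. For the converse, suppose $\varphi_1 \circ \pi = \varphi_2 \circ \pi$. A morphism of schemes is a continuous map together with a comorphism of structure sheaves, so I would check equality at both levels. On topological spaces, surjectivity of $\pi$ suffices: every $x \in X$ is $\pi(z)$ for some $z$, whence $\varphi_1(x) = (\varphi_1 \circ \pi)(z) = (\varphi_2 \circ \pi)(z) = \varphi_2(x)$. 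Thus $\varphi_1$ and $\varphi_2$ coincide as continuous maps; call the common map $\varphi$ and set $W = \varphi^{-1}(V)$ for $V \subseteq Y$ open.

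It then remains to compare the comorphisms. Writing out the composite $(\varphi_i \circ \pi)^\#$ on sections over $V$ yields
\[ \mathcal{O}_Y(V) \xrightarrow{\varphi_i^\#} \mathcal{O}_X(W) \xrightarrow{\pi^\#} \mathcal{O}_Z(\pi^{-1}(W)), \]
and the hypothesis says the two composites $\pi^\# \circ \varphi_1^\#$ and $\pi^\# \circ \varphi_2^\#$ agree. Hence $\varphi_1^\# = \varphi_2^\#$ will follow once I know that $\pi^\#\colon \mathcal{O}_X(W) \to \mathcal{O}_Z(\pi^{-1}(W))$ is injective for every open $W$. This injectivity is the heart of the matter, and it is precisely the step where reducedness is indispensable: the reduced-point inclusion $\Spec \mathbb{C} \to \Spec \mathbb{C}[\epsilon]/(\epsilon^2)$ is surjective on points yet kills the nilpotent $\epsilon$, so the conclusion genuinely fails without the hypothesis.

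To prove injectivity, I would take $f \in \mathcal{O}_X(W)$ with $\pi^\#(f) = 0$. Since $X$ is reduced, a section vanishes iff its value $f(x) \in \kappa(x)$ is zero for every $x \in W$ (on an affine reduced chart this is the statement that $\bigcap_{\mathfrak{p}} \mathfrak{p} = 0$). Fix $x \in W$ and choose $z \in \pi^{-1}(x)$ by surjectivity; the local homomorphism on stalks induces a field embedding $\kappa(x) \hookrightarrow \kappa(z)$ sending $f(x)$ to $(\pi^\# f)(z)$. As $\pi^\# f = 0$, its value at $z$ is $0$, and injectivity of the residue field map forces $f(x) = 0$. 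Since $x$ was arbitrary, $f = 0$, establishing the claim and hence $\varphi_1 = \varphi_2$.

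Finally, for the ``in particular'' clause, let $p\colon X \to Y$ be a good quotient with $X$ reduced. By definition $p$ is surjective, and for each open $V$ the ring $\mathcal{O}_Y(V) = \mathcal{O}_X(p^{-1}(V))^G$ is a subring of the reduced ring $\mathcal{O}_X(p^{-1}(V))$, so $Y$ is reduced. Applying the lemma with the roles of $(Z,X)$ played by $(X,Y)$ then shows that $p$ is an epimorphism. The main obstacle throughout is the injectivity of $\pi^\#$ on sections; once that is in place, both the topological comparison and the reduction of the comorphism comparison are routine.
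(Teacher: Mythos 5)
Your proof is correct, but it follows a genuinely different route from the paper's. The paper argues via the equalizer: it forms the fibre product of $(\varphi_1,\varphi_2)\colon X \to Y \times Y$ with the diagonal $\delta_Y \colon Y \to Y \times Y$, obtaining a locally closed immersion $W \to X$; the hypothesis $\varphi_1 \circ \pi = \varphi_2 \circ \pi$ makes the surjective map $\pi$ factor through $W$, so $W \to X$ is a surjective immersion, hence a closed immersion, hence an isomorphism because $X$ is reduced, and $\varphi_1 = \varphi_2$ follows. You instead unwind the definition of a morphism of locally ringed spaces: surjectivity of $\pi$ on points gives equality of the underlying continuous maps, and equality of the comorphisms reduces to injectivity of $\pi^{\#}\colon \mathcal{O}_X(W) \to \mathcal{O}_Z(\pi^{-1}(W))$ for every open $W$, which you establish through residue fields and reducedness; in effect you prove that a surjective morphism onto a reduced scheme is schematically dominant, which is exactly the sheaf-theoretic content that the paper's argument hides inside standard facts about immersions. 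All your steps are sound --- in particular the use of locality of the stalk maps, so that the embedding $\kappa(x) \hookrightarrow \kappa(z)$ carries $f(x)$ to $(\pi^{\#}f)(z)$, and the reduction of vanishing of a section on a reduced scheme to vanishing of its values at all points --- and your treatment of the ``in particular'' clause (the quotient is reduced because $\mathcal{O}_Y(V) = \mathcal{O}_X(p^{-1}(V))^G$ sits inside a reduced ring) coincides with the paper's parenthetical remark. What each approach buys: the paper's is shorter if one grants that diagonals are locally closed immersions, that immersions are stable under base change, and that a surjective closed immersion into a reduced scheme is an isomorphism; yours is more elementary and self-contained, isolates the key injectivity statement, and your example $\Spec\,\mathbb{C} \to \Spec\,\mathbb{C}[\epsilon]/(\epsilon^2)$ makes transparent why reducedness cannot be dropped.
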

\begin{proof}
 For morphisms $\varphi_1, \varphi_2$ as above we have a fibred diagram
 \begin{center}
  \begin{tikzcd}
   W \arrow{r}\arrow{d} & Y\arrow{d}{\delta_Y}\\
   X \arrow{r}{(\varphi_1, \varphi_2)} & Y \times Y
  \end{tikzcd}
 \end{center}
 where $\delta_Y$ is the diagonal map of $Y$, which is a locally closed embedding. In particular $W \to X$ is also a locally closed embedding. Assume that $\varphi_1 \circ \pi = \varphi_2 \circ \pi$, then by definition the map $\pi$ factors through $W \to X$. In particular, $W \to X$ is surjective and hence a closed embedding. But as $X$ is reduced, this means that it is an isomorphism. Then the diagram above shows that $\varphi_1 = \varphi_2$. In particular, if $\pi$ is a good quotient and $Z$ is reduced, so is $X$ and thus the assumptions above are satisfied.
\end{proof}

\begin{proof}[Proof of Theorem \ref{Theo:lift}]
 We first note that as $\pi$ is surjective, the $G$-invariant subsets $U$ of $X$ inject via $\pi^{-1}$ into the $\widehat G$-invariant subsets $\widehat U$ of $\widehat X$. If $\pi$ is a geometric quotient this is a bijection with  inverse map given by $\widehat U \mapsto \pi(\widehat U)$. This is well-defined because $\pi$ sends open $H$-invariant sets to open sets and it is an inverse to $\pi^{-1}$ as the fibres of $\pi$ are orbits. 
 
 Before we continue, recall the following fact, which is Lemma 5.1 in \cite{ramanathan}. Let a reductive algebraic group $G'$ act on schemes $Y,Z$. If $Y \to Z$ is an affine, $G'$-equivariant morphism and $Z \to Z \sslash G'$ is a good quotient, then $Y$ also has a good quotient $Y \to Y \sslash G'$ and the induced morphism $Y \sslash G' \to Z \sslash G'$ is affine. 
 
 First assume that $U \in \mathcal{U}_{(X,G)}$, so we have a good quotient $U \to U \sslash G = U \sslash \widehat G$. The map $\pi|_{\pi^{-1}(U)}$ is $\widehat G$-equivariant and affine. Then by the result above, $\pi^{-1}(U)$ has a good quotient $\pi^{-1}(U) \sslash \widehat G$, which maps to $U \sslash G$ via a map $\psi$.
 We want to show $\psi$ is an isomorphism, so we construct an inverse $\varphi$. The $H$-invariant map $\pi^{-1}(U) \to \pi^{-1}(U) \sslash \widehat G$ factors uniquely through a map $\varphi': U \to \pi^{-1}(U) \sslash \widehat G$ as $\pi$ is a categorical $H$-quotient and $\varphi'$ is $G$-invariant. But as $U \to U \sslash G$ is a quotient for the $G$-action on $U$, the map $\varphi'$ factors uniquely through some map $\varphi: U \sslash G \to \pi^{-1}(U) \sslash \widehat G$.  We can write the following commutative diagram
 \begin{center}
  \begin{tikzcd}
   \pi^{-1}(U) \arrow{r}{\pi} \arrow{d} & U \arrow{d} \arrow{dl}{\varphi'} & \pi^{-1}(U)\arrow{l} \arrow{r}\arrow{d} & U\arrow{d}\\
   \pi^{-1}(U) \sslash \widehat G & U \sslash G \arrow{l}{\varphi} & \pi^{-1}(U) \sslash \widehat G \arrow{l}{\psi} & U \sslash G \arrow{l}{\varphi}
  \end{tikzcd}
 \end{center}
 Via diagram chase and using that good quotients of reduced schemes are epimorphisms (Lemma \ref{Lem:epi}), we conclude that $\varphi \circ \psi$ and $\psi \circ \varphi$ are both the identity on their domains.
 
 If $U \to U \sslash G$ and $\pi$ are geometric quotients, then the preimage of some geometric point $p \in U \sslash G$ in $U$ is a $G$-orbit and thus its preimage in $\pi^{-1}(U)$ is a $\widehat G$-orbit, hence $\pi^{-1}(U) \to U \sslash G$ is a geometric quotient.
 
 Now assume that $\pi^{-1}(U)$ has a good quotient map $\pi^{-1}(U) \to \pi^{-1}(U) \sslash \widehat G$. For the trivial $H$-action on the latter space, this is a $H$-equivariant affine map and clearly the identity on $\pi^{-1}(U) \sslash \widehat G$ is a good quotient for the trivial $H$-action. Thus by the result from \cite{ramanathan}, the $H$-action on $\pi^{-1}(U)$ has a good quotient (which is isomorphic to $U$, as $\pi$ is a good $H$-quotient) and the map $\psi: U \to \pi^{-1}(U) \sslash \widehat G$ is affine. To show that it is a good quotient, we consider the diagram
 \begin{center}
  \begin{tikzcd}
   \pi^{-1}(U) \arrow{r}{\pi} \arrow{d} & U\arrow{dl}{\psi}\\
   \pi^{-1}(U) \sslash \widehat G
  \end{tikzcd}
 \end{center}
 and use that $\pi$ is an epimorphism to show that $\psi$ is surjective, $G$-invariant and sends disjoint closed $G$-invariant sets to disjoint closed sets. Given a $G$-invariant local function $f$ on $U$, the function $f \circ \pi$ is $\widehat G$-invariant, so it factors uniquely through some function $g$ on $\pi^{-1}(U) \sslash \widehat G$. Again using that $\pi$ is an epimorphism, we see $f=g \circ \psi$, so indeed $f$ factors through $\pi^{-1}(U) \sslash \widehat G$. Thus $\psi$ is a good $G$-quotient.
 
 If $\pi^{-1}(U) \to \pi^{-1}(U) \sslash \widehat G$ is a geometric quotient, its geometric fibres are orbits of $\widehat G$, so the fibres in $U$ are $G$-orbits and thus $\psi$ is a geometric quotient.
\end{proof}
Instead of looking at the correspondence of open sets admitting a good quotient induced by $H$-lifts, we can also directly consider the behaviour of equivariant Picard groups and the corresponding (semi)stable sets. Here we have the following result.
\begin{Pro} \label{Pro:Piccorr}
 Let $\pi: \widehat X \to X$ be a good $H$-lift of $(X,G)$ for $X$ a variety with an action of the reductive group $G$. Then pullback by $\pi$ induces an map $\pi^* : \text{Pic}^G(X) \to \text{Pic}^{\widehat G}(\widehat X)$ and we have
 \[\widehat X^{ss}(\pi^* L) = \pi^{-1}(X^{ss}(L))\]
 for $L \in \text{Pic}^G(X)$. If $\pi$ is a geometric $H$-lift and $H$ acts freely on $\widehat X$, the map $\pi^*$ is an isomorphism.
\end{Pro}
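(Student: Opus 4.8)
The plan is to prove Proposition \ref{Pro:Piccorr} in two stages. First I would establish that $\pi^*$ is a well-defined map of equivariant Picard groups compatible with semistable loci, and then, under the freeness and geometric-quotient hypotheses, construct an inverse to show $\pi^*$ is an isomorphism. For the first part, given a $G$-linearized line bundle $L$ on $X$, the pullback $\pi^*L$ is naturally a line bundle on $\widehat X$, and since $\pi$ is $\widehat G$-equivariant (where $\widehat G$ acts on $X$ through $\widehat G \to G$), the $G$-linearization of $L$ pulls back to a $\widehat G$-linearization of $\pi^*L$. This gives the map $\pi^*: \text{Pic}^G(X) \to \text{Pic}^{\widehat G}(\widehat X)$.

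The key compatibility $\widehat X^{ss}(\pi^*L) = \pi^{-1}(X^{ss}(L))$ I would approach via the standard description of semistable points: a point lies in $X^{ss}(L)$ iff there is a $G$-invariant section $s$ of some power $L^{\otimes m}$ with $s(x) \neq 0$ and $X_s$ affine. First I would check $\pi^{-1}(X^{ss}(L)) \subset \widehat X^{ss}(\pi^*L)$: pulling back such a section $s$ gives a $\widehat G$-invariant section $\pi^*s$ of $(\pi^*L)^{\otimes m}$ nonvanishing on $\pi^{-1}(X_s)$, and the latter is affine since $\pi$ is an affine morphism (being a good quotient). For the reverse inclusion I would use that $\pi$ is a good $H$-quotient: a $\widehat G$-invariant section of $(\pi^*L)^{\otimes m}$ is in particular $H$-invariant, and since $\pi_*(\mathcal{O}_{\widehat X}^H) = \mathcal{O}_X$ (together with the analogous statement for sections of $L^{\otimes m}$, which follows because $\pi^*L^{\otimes m}$ is an $H$-linearized bundle descending to $L^{\otimes m}$), every such section descends to a $G$-invariant section of $L^{\otimes m}$ on $X$. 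This descent of invariant sections along a good quotient is the technical heart of the first part.

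For the isomorphism statement, I would use that when $H$ acts freely and $\pi$ is a geometric quotient, $\pi: \widehat X \to X$ is a principal $H$-bundle (an fppf or even étale-locally trivial torsor). The inverse to $\pi^*$ is then descent: given a $\widehat G$-linearized bundle $\widehat L$ on $\widehat X$, the restriction of its $\widehat G$-linearization to $H$ gives $H$-descent data, and since $\pi$ is a geometric quotient by a free $H$-action, $\widehat L$ descends to a line bundle $L$ on $X$; the residual $G = \widehat G/H$ action on $X$ equips $L$ with a $G$-linearization, giving the inverse map. I would then check $\pi^* L \cong \widehat L$ compatibly with linearizations using the universal property of descent, so that $\pi^* \circ (\text{descent}) = \text{id}$ and conversely. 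The main obstacle I anticipate is verifying that the $H$-linearization on $\widehat L$ furnishes genuine descent data (cocycle condition) and that the descended bundle carries a well-defined $\widehat G/H = G$-linearization; this requires carefully tracking how the $\widehat G$-action descends modulo $H$, using that $H$ is normal so that the $\widehat G$-action on $\widehat X$ covers a $G$-action on $X$ and normalizes the $H$-torsor structure.
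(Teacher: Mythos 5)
Your overall route coincides with the paper's: the same construction of $\pi^*$, the same forward inclusion via pulled-back invariant sections together with affineness of the morphism $\pi$, the same descent of $\widehat G$-invariant sections through the good $H$-quotient, and, for the isomorphism statement, the same descent along the $H$-torsor (the paper merely packages the torsor descent in stack language, as $[X/G] \cong [\widehat X/\widehat G]$, after citing \cite[Proposition 0.9]{git} for fppf-local triviality of $\pi$, exactly as you do).

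There is, however, one genuine gap, in the reverse inclusion $\widehat X^{ss}(\pi^*L) \subset \pi^{-1}(X^{ss}(L))$. Semistability in the sense used here (for $X$ an arbitrary variety, not projective with $L$ ample) demands a $G$-invariant section $s$ of some $L^{\otimes m}$ with $s(\pi(\widehat x)) \neq 0$ \emph{and} with $X_s = \{s \neq 0\}$ affine; you invoke this affineness condition yourself in the forward direction. Your descent argument produces the section $s$ with $\pi^*s = \widehat s$, and you know $\widehat X_{\widehat s} = \pi^{-1}(X_s)$ is affine, but you never verify that $X_s$ itself is affine, and for a general variety $X$ this is not automatic. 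The paper closes this point in one line: the restriction of $\pi$ to $\widehat X_{\widehat s}$ is a good (categorical) quotient of the affine variety $\widehat X_{\widehat s}$ by the reductive group $H$, so $X_s$ is affine by \cite[Theorem 1.1]{git} (it is $\Spec$ of the ring of $H$-invariant functions on $\widehat X_{\widehat s}$). With this one additional observation your proof is complete and essentially identical to the paper's.
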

\begin{proof}
 Via the map $\widehat G \to G=\widehat G / H$ we have a natural map $\text{Pic}^G(X) \to \text{Pic}^{\widehat G}(X)$ by extending a $G$-action on a line bundle $L$ to a $\widehat G$-action. For the $\widehat G$-equivariant morphism $\pi$ we then have a natural pullback map $\text{Pic}^{\widehat G}(X) \to \text{Pic}^{\widehat G}(\widehat X)$ and the map $\pi^*$ above is the composition of these two homomorphisms. 
 
 Fix $L$ (the total space of) a $G$-linearized line bundle on $X$ and let $p: L \to X$ be the corresponding $G$-equivariant morphism. Then we have a cartesian diagram
 \begin{equation*}
 \begin{tikzcd}
  \pi^*(L) \arrow{r} \arrow{d}{\widehat p} & L\arrow{d}{p}\\
  \widehat X \arrow{r}{\pi} & X
  \end{tikzcd}
 \end{equation*}
 where all maps are $\widehat G$-equivariant. Now $G$-invariant global sections of $L$ are $G$-equivariant sections of $p$ and those correspond bijectively to $\widehat G$-equivariant sections of $\widehat p$, i.e. global sections of $\pi^*(L)$. Here we use that $\pi$ is a categorical $H$-quotient. Thus $\pi^*$ induces a natural isomorphism $\Gamma(L)^G \cong \Gamma(\pi^*(L))^{\widehat G}$. Of course this argument also works after replacing $L$ by $L^{\otimes k}$ for $k \geq 1$.
 
 Now let $x \in X^{ss}(L)$, then there exists a $G$-invariant section $s$ of some $L^{\otimes k}$ with $x \in X_s = \{x':s(x') \neq 0\}$ and $X_s$ is affine. But then $\pi^*s$ is a $\widehat G$-invariant section of $\pi^* L^{\otimes k}$ and $\widehat X_{\pi^* s} = \pi^{-1}(X_s)$ is affine as $\pi$ is an affine morphism. Hence all elements of $\pi^{-1}(x)$ are $\pi^*(L)$-semistable.
 
 Conversely for $\widehat x \in \widehat X^{ss}(\pi^*L)$ there exists a $\widehat G$-invariant section $\widehat s$ of some $\pi^* L^{\otimes k}$ with $\widehat x \in \widehat X_{\widehat s}$ and $\widehat X_{\widehat s}$ affine. By the argument above, $\widehat s = \pi^* s$ for some $G$-invariant section $s$ of $L^{\otimes k}$ and we only need to show $X_s$ affine. But clearly $\widehat X_{\widehat s} \to X_s$ is a categorical quotient of the affine variety $\widehat X_{\widehat s}$ by $H$ and thus $X_s$ is affine by \cite[Theorem 1.1]{git}. Hence $\pi(\widehat x)$ is $L$-semistable.
 
 If the action of $H$ is free on $\widehat X$, by \cite[Proposition 0.9]{git} the map $\pi$ is a fppf-locally trivial $H$-torsor. The fact that $\pi^*$ is an isomorphism $\text{Pic}^G(X) \to \text{Pic}^{\widehat G}(\widehat X)$ then follows from descent along torsors. A concise way to put the proof, using the language of stacks, is the following: the fact that $\pi$ is a $H$-torsor implies that there is a canonical isomorphism $X \cong [\widehat X/H]$. Taking the quotient stack under the actions of $G=\widehat G / H$ on both sides we have
 \[[X/G] \cong [[\widehat X/H]/(\widehat G/H)] \cong [\widehat X / \widehat G],\]
 where in the last isomorphism we use \cite[Remark 2.4]{romagny2005}. Taking Picard groups on both sides we see 
 \[\text{Pic}^G(X) = \text{Pic}([X/G]) \cong \text{Pic}([\widehat X / \widehat G]) = \text{Pic}^{\widehat G}(\widehat X)\]
 and this isomorphism is exactly given by pullback via $\pi$.
\end{proof}
In the example presented in Section \ref{Sect:Exa}, all $H$-lifts that are used will come from a free $H$-action on $\widehat X$, so we have isomorphisms of Picard groups as above.

\section{Applications}
In this section we will see several situations, where $H$-lifts naturally appear and thus allow us to conclude results about the chamber-decompositions of $G$-effective cones.
\subsection{Partial quotients}
One possibility to construct $H$-lifts is basically a reformulation of the definition.
\begin{Pro} \label{Pro:partquot}
 Let $\widehat X$ be a variety acted upon by a reductive group $\widehat G$ and assume a closed, normal subgroup $H \subset \widehat G$ acts on $\widehat X$ with a good (resp. geometric) quotient $\pi: \widehat X \to X$, where $X$ is a variety. Then $X$ carries an induced action of $\widehat G/H$ making $(\widehat X, \widehat G)$ a good (resp. geometric) $H$-lift of $(X,\widehat G/H)$. 
\end{Pro}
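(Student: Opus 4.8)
The plan is to construct the $\widehat G/H$-action on $X$ by descending the given $\widehat G$-action on $\widehat X$ along $\pi$; once this is done, the $\widehat G$-equivariance of $\pi$ holds by construction, the factoring through $G=\widehat G/H$ follows from triviality of the $H$-action on $X$, and the good (resp. geometric) quotient condition for $H$ is precisely the hypothesis. Write $a: \widehat G \times \widehat X \to \widehat X$ for the action, and for $g \in \widehat G$ let $\rho_g$ denote the automorphism $x \mapsto g\cdot x$ of $\widehat X$.

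First I would produce a morphic $\widehat G$-action on $X$. The key observation, using that $H$ is normal in $\widehat G$, is that the composite $\pi \circ a: \widehat G \times \widehat X \to X$ is invariant under the $H$-action on $\widehat G \times \widehat X$ that is trivial on the first factor and standard on the second: for $h \in H$ one has $gh = (ghg^{-1})g$ with $ghg^{-1} \in H$, so $\pi(g\cdot(h\cdot x)) = \pi((ghg^{-1})\cdot(g\cdot x)) = \pi(g\cdot x)$ by $H$-invariance of $\pi$. Since $\pi$ is a good (resp. geometric) quotient, the product map $\text{id}_{\widehat G}\times\pi: \widehat G\times\widehat X \to \widehat G\times X$ is again a good (resp. geometric) quotient for this $H$-action — the standard fact that good and geometric quotients are stable under taking products with a fixed factor, which one checks on affine charts using that forming $H$-invariants commutes with tensoring over $\mathbb{C}$ by the coordinate ring of the trivial factor. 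As a good (resp. geometric) quotient is in particular a categorical quotient, the $H$-invariant morphism $\pi\circ a$ factors uniquely through a morphism $\bar a: \widehat G\times X \to X$, and the identity $\bar a\circ(\text{id}_{\widehat G}\times\pi)=\pi\circ a$ is exactly the $\widehat G$-equivariance of $\pi$.

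Next I would verify the action axioms and descend to $G$. Writing $\bar\rho_g$ for the induced map $x\mapsto \bar a(g,x)$, the associativity and unit identities for $\bar a$ follow from those for $a$ after precomposing with the surjection $\text{id}\times\pi$ and invoking that good quotients of reduced schemes are epimorphisms (Lemma \ref{Lem:epi}). The same epimorphism property shows $H$ acts trivially on $X$: for $h\in H$ we have $\bar\rho_h\circ\pi = \pi\circ\rho_h = \pi = \text{id}_X\circ\pi$, whence $\bar\rho_h = \text{id}_X$. To descend the action morphism itself, I would use that $q:\widehat G\to G=\widehat G/H$ is a geometric quotient for the right-translation action of $H$ on $\widehat G$, so that $q\times\text{id}_X: \widehat G\times X \to G\times X$ is a categorical quotient for the $H$-action $h\cdot(g,x)=(gh^{-1},x)$. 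Triviality of the $H$-action on $X$ gives $\bar a(gh^{-1},x)=\bar\rho_g\bar\rho_{h^{-1}}(x)=\bar\rho_g(x)=\bar a(g,x)$, so $\bar a$ is invariant and factors uniquely through a morphism $\tilde a: G\times X\to X$, which one checks is a $G$-action exactly as before.

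With this $G$-action in hand, $(\widehat X,\widehat G)$ satisfies all requirements: $\pi$ is $\widehat G$-equivariant for the induced $\widehat G$-action on $X$ by construction, and it is a good (resp. geometric) $H$-quotient by hypothesis. I expect the main obstacle to be the two descent-along-quotient arguments — establishing that $\text{id}_{\widehat G}\times\pi$ and $q\times\text{id}_X$ remain good (resp. geometric), hence categorical, quotients after taking products with a fixed factor — since these underpin both the existence of the action morphisms $\bar a,\tilde a$ and the verification of the action axioms. The conjugation identity $gh=(ghg^{-1})g$ afforded by normality of $H$ is the conceptual input that makes the invariance statements hold and should be emphasized as the crux of the argument.
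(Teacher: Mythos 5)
Your argument is correct, but note that the paper offers no proof at all of Proposition \ref{Pro:partquot}: it is introduced as ``basically a reformulation of the definition'' and stated bare, so there is no official proof to compare against. The content your write-up supplies --- and the only thing that genuinely needs proof --- is the descent of the $\widehat G$-action along $\pi$, and your route is the natural one: normality of $H$ gives the invariance $\pi(g\cdot(h\cdot x))=\pi(g\cdot x)$, product-stability of good quotients makes $\mathrm{id}_{\widehat G}\times\pi$ a categorical quotient so that $\bar a$ exists, and Lemma \ref{Lem:epi} yields the action axioms, the triviality of the induced $H$-action, and the further descent to $G=\widehat G/H$. Two refinements you should make explicit. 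First, in the product-stability step the affine-chart computation $(\Gamma(V)\otimes A)^H=\Gamma(V)\otimes A^H$ only verifies conditions (1) and (2) in the paper's definition of a good quotient; the separation condition (3) then follows because $H$ is reductive, so that an affine invariant morphism inducing the invariant functions over an affine cover of the target is automatically a good quotient (disjoint closed invariant subsets of an affine scheme are separated by an invariant function). Second, the definition of an $H$-lift requires $G=\widehat G/H$ to be reductive; this is standard in characteristic zero (quotients of linearly reductive groups by closed normal subgroups are linearly reductive) but deserves a sentence. With those two points added, your proof is a complete, self-contained justification of what the paper treats as immediate, and it is in the same spirit as the paper's own proof of Theorem \ref{Theo:lift}, which likewise leans on Lemma \ref{Lem:epi} and on affine equivariant-morphism arguments.
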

Combined with Corollary \ref{Cor:chamberopen}, this tells us the following: assume we are given a $\widehat G$-action on $\widehat X$ and a closed normal subgroup $H$ of $\widehat G$ acting on the open, $\widehat G$-invariant set $U_0 \subset \widehat X$ with geometric quotient $U_0 \sslash H$. Then the 
open sets $U \in \mathcal{U}^{\text{pr,g}}_{(\widehat X, \widehat G)}$ contained in $U_0$ are in bijection with $\mathcal{U}^{\text{pr,g}}_{(U_0 \sslash H,\widehat G/H)}$, which is a problem on a smaller-dimensional variety. 

\subsection{Morphisms to homogeneous spaces}
\begin{Pro} \label{Pro:homspace}
 Let a reductive group $G$ act on an irreducible variety $X$ and assume we are given a $G$-equivariant morphism $\varphi: X \to Z$ to a homogeneous $G$-space $Z$ (i.e. the action of $G$ on $Z$ is transitive). Let $z_0 \in Z$ be a closed point and let $H=G_{z_0}$ be its stabilizer in $G$, which we assume to be reductive. Consider the variety 
 \[\widehat X = \{(g,x) : \varphi(g x) = z_0\} \subset G \times X\]
 with the action of $G \times H$ given by
 \[(g',h).(g,x) = (h g(g')^{-1},g'x).\]
 Then the projection 
 \[\pi_X : \widehat X \to X, (g,x) \mapsto x\]
 makes $(\widehat X, G \times H)$ a geometric $H$-lift for $(X,G)$. On the other hand, for $Y=\pi^{-1}(z_0) \subset X$ with the induced action of $H=G_{z_0}$, the map 
 \[\pi_Y : \widehat X \to Y, (g,x) \mapsto gx\]
 makes $(\widehat X, G \times H)$ a geometric $G$-lift of $(Y,H)$.
\end{Pro}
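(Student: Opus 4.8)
The plan is to verify directly the three defining properties of a geometric $G$-lift of $(Y,H)$, taking $\widehat G = G \times H$ with the closed normal subgroup $G = G \times \{e\}$, so that $(G \times H)/G \cong H$; both $G \times H$ and its quotient $H$ are reductive by hypothesis. First I would record the structural preliminaries: the stabilizer $H = G_{z_0}$ preserves $Y = \varphi^{-1}(z_0)$, since for $h \in H$ and $y \in Y$ equivariance gives $\varphi(hy) = h\varphi(y) = hz_0 = z_0$; and $\pi_Y(g,x) = gx$ genuinely lands in $Y$, because membership $(g,x) \in \widehat X$ is by definition the condition $\varphi(gx) = z_0$.

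Next I would check that $\pi_Y$ is $(G \times H)$-equivariant for the action on $Y$ induced by the projection $G \times H \to (G \times H)/G = H$ followed by the $H$-action. This is a one-line computation: for $(g',h) \in G \times H$,
\[\pi_Y\big((g',h).(g,x)\big) = \pi_Y(hg(g')^{-1}, g'x) = hg(g')^{-1}g'x = h\cdot gx = h.\pi_Y(g,x).\]
In particular, restricting to the subgroup $G = G \times \{e\}$ shows that $\pi_Y$ is $G$-invariant.

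The heart of the argument is that $\pi_Y$ is a geometric quotient for the $G$-action, and here I would trivialise the situation by an explicit isomorphism. The mutually inverse morphisms
\[\Psi: \widehat X \to G \times Y,\ (g,x) \mapsto (g, gx), \qquad \Phi: G \times Y \to \widehat X,\ (g,y) \mapsto (g, g^{-1}y)\]
(well-defined, respectively, by the defining equation of $\widehat X$ and by $y \in Y$) identify $\widehat X$ with $G \times Y$. Under $\Psi$ the $G$-action $(g',e).(g,x) = (g(g')^{-1}, g'x)$ becomes $g'.(g,y) = (g(g')^{-1}, y)$, i.e.\ right translation on the $G$-factor and trivial on $Y$, while $\pi_Y$ becomes the projection $G \times Y \to Y$. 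Since the projection off a product $G \times Y$, with $G$ acting only by translation on the first factor, is a geometric quotient -- indeed a trivial principal $G$-bundle whose geometric fibres are single free $G$-orbits -- this yields the claim.

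The main obstacle I anticipate is bookkeeping rather than anything conceptual: ensuring that the induced $H$-action on $Y$, the identification $\widehat G / G = H$, and the direction of the translation all match the conventions fixed in the definition of an $H$-lift, and confirming that $\Psi$ and $\Phi$ are genuine inverse morphisms of varieties (each is assembled from the action morphism $G \times X \to X$ together with group multiplication and inversion, hence a morphism). Once the $G$-equivariant isomorphism $\widehat X \cong G \times Y$ is in hand, the geometric-quotient property is immediate, so no delicate appeal to Lemma \ref{Lem:epi} or to the quotient-theoretic machinery used in the proof of Theorem \ref{Theo:lift} is required for this direction.
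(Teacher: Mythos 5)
Your argument covers only half of the proposition. What you verify is that $\pi_Y$ makes $(\widehat X, G\times H)$ a geometric $G$-lift of $(Y,H)$, and this part is correct; in fact it is essentially the paper's own argument, since the paper notes that the action map $\sigma: G\times X \to X$ is a \emph{trivial} $G$-torsor via the automorphism $(g,x)\mapsto (g,g^{-1}x)$ of $G\times X$ and that $\pi_Y$ is its base change along $\{z_0\}\hookrightarrow Z$; your maps $\Psi,\Phi$ are exactly this trivialization restricted to $\widehat X$. But the proposition also asserts that $\pi_X: \widehat X \to X$, $(g,x)\mapsto x$, makes $(\widehat X, G\times H)$ a geometric $H$-lift of $(X,G)$, i.e.\ that $\pi_X$ is a geometric quotient for the action of $\{e\}\times H$, and your proposal never addresses this. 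This omission is not repaired by symmetry: the trivialization strategy genuinely fails for $\pi_X$, because $\pi_X$ is the base change along $\varphi$ of the map $\psi: G\to Z$, $g\mapsto g^{-1}z_0$, which is an $H$-torsor that is in general only fpqc-locally trivial, not trivial. Already in the paper's own application (Theorem \ref{Theo:Sl2Pn}, with $G=\text{PGL}_2$, $Z=\mathbb{P}^1\times\mathbb{P}^1\setminus\Delta$, $H=\mathbb{C}^*$) the torsor $\text{PGL}_2\to Z$ is non-trivial: a global trivialization would give $\text{PGL}_2\cong Z\times\mathbb{C}^*$, but $\text{Pic}(\text{PGL}_2)\cong\mathbb{Z}/2$ is finite while $\text{Pic}(Z\times\mathbb{C}^*)\cong\text{Pic}(Z)\cong\mathbb{Z}$ is not. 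So there is no product decomposition $\widehat X\cong H\times X$ to fall back on, and one must argue as the paper does: a base change of an fpqc-locally trivial $H$-torsor is again such a torsor, and such torsors are geometric quotients.

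A second, smaller gap: both the statement ("Consider the \emph{variety} $\widehat X$") and the definition of a lift require $\widehat X$, and for the pair $(Y,H)$ also $Y$, to be varieties, whereas a priori they are only scheme-theoretic fibres of $\varphi\circ\sigma$ and $\varphi$ and could be non-reduced. The paper spends the first paragraph of its proof on exactly this point: in characteristic zero the generic fibres are geometrically reduced, by \cite[Theorem 9.7.7]{egaIV3} the locus of points with geometrically reduced fibre is open and nonempty, and by transitivity of the $G$-action on $Z$ all fibres are isomorphic, hence all are reduced. Your isomorphism $\widehat X\cong G\times Y$ reduces the question for $\widehat X$ to the question for $Y$, but the reducedness of $Y$ itself still requires this argument; it cannot be dismissed as bookkeeping about "genuine inverse morphisms of varieties."
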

\begin{proof}
 By definition of $Y$ and $\widehat X$, we have cartesian diagrams
 \begin{equation}\label{eqn:DefhatXY}
 \begin{tikzcd}
  \widehat X \arrow{r}{\pi_Y} \arrow{d} & Y \arrow{r} \arrow{d} &\{z_0\} \arrow{d}\\
  G \times X \arrow{r}{\sigma} & X \arrow{r}{\varphi} & Z
  \end{tikzcd}
 \end{equation}
 where $\sigma$ is the action map of $G$ on $X$. As we are in characteristic zero and as $G \times X$ and $X$ are irreducible, the fibres of the generic point $\eta_Z$ of $Z$ under $\varphi$ and $\varphi \circ \sigma$ are geometrically reduced. Hence by \cite[Theorem 9.7.7]{egaIV3}, the set of closed points in $Z$ whose fibre under $\varphi$ and $\varphi \circ \sigma$ is geometrically reduced is open and nonempty. But because the $G$-action on $Z$ is transitive, all these fibres are isomorphic to $Y$ and $\widehat X$, respectively. Thus these are varieties over $\mathbb{C}$. 
 
 From the formula for the action of $G \times H$ on $\widehat X$, it is clear that the maps $\pi_X, \pi_Y$ are $G \times H$-equivariant for the induced actions of $G=G\times H / H$ on $X$ and $H=G \times H / G$ on $Y$.
 
 For the map $\pi_X$, observe that we can obtain it using a different cartesian diagram, namely
 \[\begin{tikzcd}
    \widehat X \arrow{r} \arrow{d}{\pi_X} & G \arrow{d}{\psi}\\
    X \arrow{r}{\varphi} & Z
   \end{tikzcd}
\]
 where $\psi(g) = g^{-1} z_0$. Clearly $\psi$ is a fpqc-locally trivial $H$-torsor representing $Z$ as the quotient $G/H$. But then its base change $\pi_X$ via $\varphi$ is still a fpqc-locally trivial $H$-torsor and thus a geometric quotient.
 
 On the other hand, for $\pi_Y$ we see from the diagram (\ref{eqn:DefhatXY}) that it is a base change of the map $\sigma$, which clearly is a (trivial) $G$-torsor (using the automorphism $(g,x) \mapsto (g,g^{-1}x)$ of $G \times X$). Thus it is a (trivial) $G$-torsor itself and hence a geometric quotient.
\end{proof}
Using Theorem \ref{Theo:lift} we see that given a $G$-action on a variety $X$ and a subset $Y$ of $X$ obtained as the fibre of an equivariant map to a $G$-homogeneous space, we have a bijection between $\mathcal{U}_{(X,G)}$ and $\mathcal{U}_{(Y,G_Y)}$, where $G_Y$ is the subgroup of $G$ leaving $Y$ stable. 

Note that in \cite[Definition 15.1]{bbquotients} a subvariety $Y \subset X$ such that for all $y \in Y$ we have $H=\{g \in G: gy \in Y\}$ is called a strong $H$-section of $X$. If $X$ is normal, \cite[Lemma 15.2]{bbquotients} says that for such a $Y$ the morphism $G \times_H Y \to X$ given by $[(g,y)] \mapsto gy$ is a $G$-isomorphism. Using this isomorphism, we have a $G$-equivariant projection $X \cong G \times_H Y \to G / H$ with fibre $Y$ over $[e] \in G /H$, placing us in the situation of Proposition \ref{Pro:homspace}. Again it has been noted before that $Y$ has a good/geometric $H$-quotient iff $X$ has a good/geometric $G$-quotient (\cite[Corollary 15.3]{bbquotients}).

\section{Example} \label{Sect:Exa}
To illustrate how the techniques above can be used in practice, consider the diagonal action of $G=\text{PGL}_2$ on $X=(\mathbb{P}^1)^n$. We demonstrate how some of the chambers of the $G$-effective cone can be related to chambers of the cone $C_\beta$ for a linear action of $(\mathbb{C}^*)^{n-1}$ on $\mathbb{C}^{2n-4}$. Here we can compute the chamber decomposition as well as the resulting quotient varieties using the toric methods we recalled in Section \ref{Sect:toric}. 

The action of $G$ on $X$ has been intensely studied in the past(\cite{git}, \cite{polito}, \cite{ballquot}, \cite{weighted}). The line bundle $\mathcal{O}(a_1, \ldots, a_n)$ on $X$ carries a (unique) $G$-linearization iff the sum of the $a_i$ is even, so
\[\text{Pic}^G(X) \cong \left\{(a_1, \ldots, a_n) \in  \mathbb{Z}^n: \sum_{i=1}^n a_i \equiv 0 \text{ mod } 2\right\} \subset \mathbb{Z}^n\]
and the effective cone is given by
\[(\mathbb{R}_{\geq 0})^n \subset \mathbb{R}^n = \text{Pic}^G(X)_{\mathbb{R}}.\]
We can analyze (semi)stability with respect to a given polarization using the Hilbert-Mumford numerical criterion.
For $a=(a_1, \ldots, a_n) \in (\mathbb{Z}_{>0})^n$ with $|a|=\sum_{i=1}^n a_i$ even, a point $p=(p_1, \ldots, p_n) \in X$ is semistable with respect to $\mathcal{O}(a_1, \ldots, a_n)$ iff for all $p \in \PP^1$ we have $\sum_{i: p_i=p} a_i \leq |a|/2$. The point $p$ is stable iff all inequalities above are strict. From this we see that the $G$-effective ample cone is given by
\[C^G(X) = \left\{(a_1, \ldots, a_n) \in \mathbb{R}^n: 0 < a_i \leq \sum_{j \neq i} a_j\right\} \subset \mathbb{R}^n.\]
The criterion above also gives an explicit identification of the VGIT-chamber structure of $C^G(X)$. For $S \subset \{1, \ldots, n\}$ consider the half-space
\[H_S = \left\{(a_1, \ldots, a_n) \in \mathbb{R}^n: \sum_{i \in S} a_i \geq  \sum_{i \in \{1,\ldots, n\} \setminus S} a_i\right\} \subset \mathbb{R}^n.\]
Then the hyperplanes corresponding to the half-spaces above divide $C^G(X)$ into connected components, which are exactly the chambers of the VGIT-decomposition as in Section \ref{Sect:projVGIT}.

Though it is easy to determine the various chambers, it is more difficult to compute the quotients associated to them. In \cite{ballquot}, some of the quotients are computed for $n=5,6,7,8$. Using the techniques from the previous sections, we are able to compute these quotients for chambers contained in certain subcones of $C^G(X)$. 

For the notation below, recall from Section \ref{Sect:toric} that given an action of a torus $T$ on $\mathbb{C}^k$, the set of linearizations of the action is given by the character group $\widehat T$. Inside $\widehat T \otimes \mathbb{R}$ we have a fan $\Sigma_{\text{GKZ}}$ such that the set of $\chi$-semistable points in $\mathbb{C}^k$ is constant as the linearization $\chi$ varies in the relative interior of the cones of $\Sigma_{\text{GKZ}}$, which are denoted by $\Gamma_{\Sigma,I_\emptyset}$. 
\begin{Theo} \label{Theo:Sl2Pn}
 Let $S \subset \{1, \ldots, n\}$ with $|S|=2$, then the chambers $\sigma$ of $C^G(X)$ contained in $H_S$ are in bijective correspondence to the chambers $\Gamma_{\Sigma, I_\emptyset} \in \Sigma_{\text{GKZ}}$ for the action of $T=(\mathbb{C}^*)^{n-1}$ on $\mathbb{C}^{2n-4}$ given by
 \begin{align}
   &(t_1, \ldots, t_{n-2},s).(x_1, y_1, x_2, y_2, \ldots, x_{n-2}, y_{n-2}) \label{eqn:Taction}\\
   =&(t_1 x_1, s t_1 y_1, t_2 x_2, s t_2 y_2, \ldots, t_{n-2} x_{n-2},s t_{n-2} y_{n-2}). \nonumber
 \end{align}
 Under this correspondence, the quotient variety associated to $\sigma$ is the toric variety associated to the fan $\Sigma$. 
\end{Theo}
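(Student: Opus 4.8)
The plan is to realize the asserted correspondence as a composition of the bijections established in the previous sections, stripping the $\text{PGL}_2$-action on $(\mathbb{P}^1)^n$ down, in three steps, to the torus action (\ref{eqn:Taction}) on $\mathbb{C}^{2n-4}$. Without loss of generality take $S=\{1,2\}$. First I would identify the chambers of $C^G(X)$ lying in $H_S$ with elements of $\mathcal{U}^{\text{pr,g}}$. Since $X=(\mathbb{P}^1)^n$ is smooth, irreducible and projective, every effective divisor $\mathcal{O}(a_1,\ldots,a_n)$ (with all $a_i\geq 0$) is globally generated and hence semiample, and $C^G(X)$ is full-dimensional with walls of codimension one; thus Proposition \ref{Pro:Xsmoothproj} applies and gives a bijection between chambers $\sigma$ and $\mathcal{U}^{\text{pr,g}}_{(X,G)}$ via $\sigma\mapsto X^{ss}(\sigma)$. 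Writing $U_0=\{p:p_1\neq p_2\}$, I would use the Hilbert--Mumford description recalled above to check that a chamber $\sigma$ lies in $H_S$ (equivalently in its interior, since $\partial H_S$ is itself a wall) if and only if $X^{ss}(\sigma)\subset U_0$: in the interior of $H_S$ one has $a_1+a_2>|a|/2$, forcing $p_1\neq p_2$ at every semistable point, whereas on the other side the configuration $p_1=p_2$ with the remaining points generic is stable. Together with Corollary \ref{Cor:chamberopen}, this identifies the chambers in $H_S$ with $\mathcal{U}^{\text{pr,g}}_{(U_0,G)}$.

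Next I would invoke the $H$-lift coming from a homogeneous space. The $G$-equivariant map $\varphi:U_0\to Z$, $p\mapsto(p_1,p_2)$, where $Z$ is the homogeneous space of ordered distinct pairs in $\mathbb{P}^1$, places us in the setting of Proposition \ref{Pro:homspace} with $z_0=(0,\infty)$ and $H=G_{z_0}\cong\mathbb{C}^*$ the diagonal torus, which is reductive. The fibre is $Y=\varphi^{-1}(z_0)\cong(\mathbb{P}^1)^{n-2}$, with $H=\mathbb{C}^*$ acting by simultaneous scaling of the remaining $n-2$ points and $G_Y=H$. Proposition \ref{Pro:homspace} produces, through a common space $\widehat X$, a geometric $H$-lift $\pi_X$ of $(U_0,G)$ and a geometric $G$-lift $\pi_Y$ of $(Y,H)$; applying Theorem \ref{Theo:lift} to each and composing yields a quotient-preserving bijection $\mathcal{U}^{\text{pr,g}}_{(U_0,G)}\cong\mathcal{U}^{\text{pr,g}}_{(Y,H)}$.

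Then I would descend from $(\mathbb{P}^1)^{n-2}$ to affine space. Let $\widetilde Y=(\mathbb{C}^2\setminus 0)^{n-2}\subset\mathbb{C}^{2n-4}$; the projection $\widetilde Y\to Y$ is a geometric quotient by $(\mathbb{C}^*)^{n-2}$, so by Proposition \ref{Pro:partquot} the pair $(\widetilde Y,T)$ with $T=(\mathbb{C}^*)^{n-2}\times H=(\mathbb{C}^*)^{n-1}$ is a geometric $(\mathbb{C}^*)^{n-2}$-lift of $(Y,H)$, and the induced $T$-action on $\mathbb{C}^{2n-4}$ is exactly (\ref{eqn:Taction}), with weight vectors $\beta_{x_i}=e_i$ and $\beta_{y_i}=e_i+e_{n-1}$. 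Theorem \ref{Theo:lift} gives $\mathcal{U}^{\text{pr,g}}_{(Y,H)}\cong\mathcal{U}^{\text{pr,g}}_{(\widetilde Y,T)}$. All $\beta_i$ are nonzero, and $C_\beta$ is strongly convex since every generator has positive value under the linear form $\ell$ summing the first $n-2$ coordinates; hence Proposition \ref{Pro:toriccorr} identifies the chambers of $\Sigma_{\text{GKZ}}$ with $\mathcal{U}^{\text{pr,g}}_{(\mathbb{C}^{2n-4},T)}$ and each quotient $\mathbb{C}^{2n-4}\sslash_\chi T$ with the toric variety of the corresponding fan $\Sigma$. Finally, via Corollary \ref{Cor:chamberopen}, I would show the restriction to $\widetilde Y$ loses nothing: at a point with $x_i=y_i=0$ the weights of the nonzero coordinates lie in the hyperplane $\{v_i=0\}$, which cannot contain the interior of any chamber, so by constancy of the semistable locus along each chamber no such point is ever semistable; thus $\mathcal{U}^{\text{pr,g}}_{(\widetilde Y,T)}$ is the full set of chambers of $\Sigma_{\text{GKZ}}$. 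Composing the four bijections proves the correspondence, and tracking the quotients through Theorem \ref{Theo:lift} and Proposition \ref{Pro:toriccorr} shows the quotient attached to $\sigma$ is the toric variety of $\Sigma$.

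I expect the main obstacle to be the semistability bookkeeping in the two restriction steps, namely matching the cut to $U_0$ precisely with ``chambers in $H_S$'' and the cut to $\widetilde Y$ with ``all chambers of $\Sigma_{\text{GKZ}}$''; these are the genuinely content-bearing convex-geometric Hilbert--Mumford computations, whereas the assembly of the homogeneous-space lift, the partial-quotient lift, and the toric correspondence is then formal.
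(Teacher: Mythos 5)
Your proposal is correct and follows essentially the same route as the paper's own proof: Proposition \ref{Pro:Xsmoothproj} plus the Hilbert--Mumford criterion to cut down to $(\mathbb{P}^1)^n\setminus\Delta_{12}$, the homogeneous-space lift of Proposition \ref{Pro:homspace} with Theorem \ref{Theo:lift} to reach $((\mathbb{P}^1)^{n-2},\mathbb{C}^*)$, the partial-quotient lift of Proposition \ref{Pro:partquot} to reach $((\mathbb{C}^2\setminus\{0\})^{n-2},(\mathbb{C}^*)^{n-1})$, and Proposition \ref{Pro:toriccorr} for the toric side. The only (harmless) deviation is in extending from $(\mathbb{C}^2\setminus\{0\})^{n-2}$ to all of $\mathbb{C}^{2n-4}$: the paper observes that points with $x_i=y_i=0$ have nonfinite stabilizer and so are never stable, whereas you argue via the weight cone of such a point lying in a coordinate hyperplane, which cannot meet a chamber interior.
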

\begin{proof}
 Assume for simplicity of notation $S=\{1,2\}$ below. As $X$ is smooth and as every effective divisor is semiample on $X$, by Proposition \ref{Pro:Xsmoothproj} the chambers of $C^G(X)$ are in bijection with the open sets in $\mathcal{U}^{\text{pr,g}}_{(X,G)}$ by sending $\sigma$ to $X^{ss}_\sigma$. Now for any $a \in C^G(X) \cap \mathrm{int}(H_S)$ and $p \in X$ semistable with respect to $a$ we know $p_1 \neq p_2$ by the Hilbert-Mumford criterion. Thus under the correspondence above, the chambers contained in $H_S$ correspond to the subset 
 \[\mathcal{U}^{\text{pr,g}}_{((\PP^1)^{n} \setminus \Delta_{12},G)} \subset \mathcal{U}^{\text{pr,g}}_{(X,G)},\]
 where $\Delta_{12}=\{(p_1, \ldots, p_n): p_1 = p_2\}$. So the projection $\varphi: (\PP^1)^{n} \setminus \Delta_{12} \to \PP^1 \times \PP^1 \setminus \Delta=Z$ to the first two factors is a $G$-equivariant morphism to the homogeneous $G$-space $Z$. For $z_0=([0:1],[1:0]) \in Z$, the stabilizer $G_{z_0}$ is exactly the diagonal torus 
 \[H=\mathbb{C}^*=\left\{\left[ \begin{pmatrix} 1 & 0\\0 & a \end{pmatrix} \right] : a \in \mathbb{C}^* \right\} \subset \text{PGL}_2.\]
 By Proposition \ref{Pro:homspace} we obtain a variety $\widehat X$ with an action of $G \times H$ which is a geometric $H$-lift for $(X,G)$ and a geometric $G$-lift for $(\varphi^{-1}(z_0), H) = ((\PP^1)^{n-2}, \mathbb{C}^*)$. Here the action of $\mathbb{C}^*$ on $(\mathbb{P}^1)^{n-2}$ is given by 
 \[a.([x_1,y_1], \ldots, [x_{n-2},y_{n-2}]) = ([x_1,a y_1], \ldots, [x_{n-2},a y_{n-2}]).\]
 By Theorem \ref{Theo:lift} the geometric $H$ and $G$-lifts above give a natural bijection 
 \[\mathcal{U}^{\text{pr,g}}_{((\PP^1)^{n} \setminus \Delta_{12},G)} = \mathcal{U}^{\text{pr,g}}_{((\PP^1)^{n-2},\mathbb{C}^*)}.\]
 We now approach the pair $((\PP^1)^{n-2}, \mathbb{C}^*)$ from a different angle. Of course the space $(\PP^1)^{n-2}$ is a geometric quotient of $(\mathbb{C}^2 \setminus \{0\})^{n-2}$ by $(\mathbb{C}^*)^{n-2}$ via the action
 \begin{align*}
   &(t_1, \ldots, t_{n-2}).(x_1,y_1,  \ldots, x_{n-2}, y_{n-2})\\
   =&(t_1 x_1, t_1 y_1, t_2 x_2, t_2 y_2, \ldots, t_{n-2} x_{n-2}, t_{n-2} y_{n-2}).
 \end{align*}
 The action of $\mathbb{C}^*$ on $(\PP^1)^{n-2}$ lifts to a linear action on the prequotient $\mathbb{C}^{2n-4}$, which commutes with the action of $(\mathbb{C}^*)^{n-2}$ above and together they determine the action of $(\mathbb{C}^*)^{n-1}$ given in (\ref{eqn:Taction}).
 By Proposition \ref{Pro:toriccorr} the chambers of the secondary fan $\Sigma_{\text{GKZ}}$ for this toric action correspond to elements of $\mathcal{U}^{\text{pr,g}}_{(\mathbb{C}^{2n-4}, (\mathbb{C}^*)^{n-1})}$ by sending a chamber $\Gamma_{\Sigma, I_{\emptyset}}$ to $(\mathbb{C}^{2n-4})^{ss}_{\chi}=(\mathbb{C}^{2n-4})^{s}_{\chi}$ for any $\chi$ contained in this chamber.
 
 However, for the action above no point $(x_1, y_1, \ldots, x_n, y_n)$ with $x_i = y_i=0$ for some $i$ can be stable (with respect to any character) as it has nonfinite stabilizer. Thus we have
 \[\mathcal{U}^{\text{pr,g}}_{(\mathbb{C}^{2n-4}, (\mathbb{C}^*)^{n-1})} = \mathcal{U}^{\text{pr,g}}_{((\mathbb{C}^2 \setminus \{0\})^{n-2}, (\mathbb{C}^*)^{n-1})}.\]
 Using Proposition \ref{Pro:partquot} the space $((\mathbb{C}^2 \setminus \{0\})^{n-2}, (\mathbb{C}^*)^{n-1})$ is a $(\mathbb{C}^*)^{n-2}$-lift of $((\PP^1)^{n-2}, \mathbb{C}^*)$ as above, so we can identify
 \[\mathcal{U}^{\text{pr,g}}_{((\PP^1)^{n-2},\mathbb{C}^*)} = \mathcal{U}^{\text{pr,g}}_{((\mathbb{C}^2 \setminus \{0\})^{n-2}, (\mathbb{C}^*)^{n-1})}.\]
 Combining the correspondences above (see also the diagram in Remark \ref{Rmk:overview}) we have proved the claim.
\end{proof}
\begin{Rmk} \label{Rmk:overview}
 In the situation of Theorem \ref{Theo:Sl2Pn} we can not only relate chambers of the $G$-effective cones for $((\PP^1)^n,\text{PGL}_2)$ and $(\mathbb{C}^{2n-4},(\mathbb{C}^*)^{n-1})$ abstractly but we can actually find a linear map between the equivariant Picard groups inducing this correspondence. Recall from Proposition \ref{Pro:Piccorr} that for $\pi: \widehat X \to X$ a geometric $H$-lift with respect to a free $H$-action, the pullback by $\pi$ induces an isomorphism $\pi^*: \text{Pic}^G(X) \to \text{Pic}^{\widehat G}(\widehat X)$ with $\pi^{-1}(X^{ss}(L))=\widehat X^{ss}(\pi^* L)$ for $L \in \text{Pic}^G(X)$. We illustrate again the course of the proof of Theorem \ref{Theo:Sl2Pn}.
 \[
  \begin{tikzcd}[column sep=small]
   \text{PGL}_2 \curvearrowright (\PP^1)^n& & (\mathbb{C}^*)^{n-1} \curvearrowright \mathbb{C}^{2n-4}\\
   \text{PGL}_2 \curvearrowright (\PP^1)^{n} \setminus \Delta_{12} \arrow[Subseteq]{u}{} \arrow{rd}& & (\mathbb{C}^*)^{n-1} \curvearrowright (\mathbb{C}^2 \setminus \{0\})^{n-2}\arrow[Subseteq]{u}{} \arrow{ld}\\
   & \mathbb{C}^* \curvearrowright (\mathbb{P}^1)^{n-2}
  \end{tikzcd}
 \]
 Both arrows at the bottom are (compositions of) geometric $H$-lifts for free $H$-actions, so they induce isomorphisms of equivariant Picard groups compatible with forming semistable sets. We have to see how the two inclusions at the top behave in this respect.
 
 The inclusion $(\mathbb{C}^2 \setminus \{0\})^{n-2} \subset \mathbb{C}^{2n-4}$ has complement of codimension $2$, so it induces isomorphisms of (equivariant) Picard groups and (invariant) sections of line bundles. Also the complement of the inclusion above consists of points with nonfinite stabilizers. So for every linearization on $\mathbb{C}^{2n-4}$ such that stable and semistable points agree, these sets are anyway contained in  $(\mathbb{C}^2 \setminus \{0\})^{n-2}$. Thus on the interior of the chambers in $\text{Pic}^{(\mathbb{C}^*)^{n-1}}(\mathbb{C}^{2n-4})_\mathbb{Q}$ the isomorphism above respects the formation of (semi)stable points. Note that this is not true for all linearizations: for the trivial linearization all of $\mathbb{C}^{2n-4}$ is semistable, but on $(\mathbb{C}^2 \setminus \{0\})^{n-2}$ the trivial linearization has no semistable points (as this variety is not affine).
 
 For the other inclusion $i: (\PP^1)^{n} \setminus \Delta_{12} \hookrightarrow (\PP^1)^n$ we have
 \[\text{Pic}^G((\PP^1)^n \setminus \Delta_{12})_\mathbb{Q} = \text{Pic}^G((\PP^1)^n)_\mathbb{Q} / \mathbb{Q} \mathcal{O}(1,1,0, \ldots, 0)\]
 and $i^*$ is the corresponding quotient map. For any $G$-linearized line bundle $L'$ on $(\PP^1)^n \setminus \Delta_{12}$, which is the restriction of a bundle $L$ on $(\PP^1)^n$, any invariant section $s'$ of $(L')^{\otimes k}$ extends to a section $s$ of $(L \otimes\mathcal{O}(m,m,0, \ldots, 0))^{\otimes k}$ vanishing on $\Delta_{12}$ for $m \gg 0$ (take $m$ greater than the order of the rational section $s$ of $L^{\otimes k}$ along $\Delta_{12}$). Conversely, for $L=\mathcal{O}(a_1, a_2, \ldots, a_n)$ on $(\PP^1)^n$ with $a_1 + a_2 > a_3 + \ldots + a_n$ we consider again the Hilbert-Mumford criterion from above. For $S \subset \{1, \ldots, n\}$ and $\Sigma_S(\textbf a) = \Sigma_{s \in S} a_s - \Sigma_{s \notin S} a_s$ we have
 \begin{itemize}
  \item $\Sigma_{S}(\textbf{a})>0$ for $1,2 \in S$,
  \item $\Sigma_{S}(\textbf{a})<0$ for $1, 2 \notin S$,
  \item $\Sigma_{S}(\textbf{a}) = \Sigma_{S}(\textbf{a}+(m,m,0, \ldots, 0))$ for $1 \in S, 2 \notin S$ or $1 \notin S, 2 \in S$.
 \end{itemize}
 So we see that twisting $L$ by $\mathcal{O}(m,m,0, \ldots, 0)$ does not change the set of semistable points. In fact this shows that all cones of the VGIT-fan in $\text{Pic}^G((\PP^1)^n)_{\mathbb{R}}$ with relative interior strictly inside the interior of the half-space $H_{\{1,2\}}=\{a:\Sigma_{\{1,2\}}\geq 0\}$ have the cone generated by $\mathcal{O}(1,1, 0, \ldots, 0)$ in their closure and thus as a face. 
 Moreover, the set of semistable points of $L$ is contained in $(\PP^1)^n \setminus \Delta_{12}$. All this shows that for any $L \in \text{Pic}^G((\PP^1)^n)$ and $L' = i^* L$ its restriction to $(\PP^1)^n \setminus \Delta_{12}$, we have
 \[((\PP^1)^n \setminus \Delta_{12})^{ss}_{L'} = ((\PP^1)^n)^{ss}_{L\otimes\mathcal{O}(m,m,0, \ldots, 0)} \]
 for $m \gg 0$. To conclude, inside $\text{Pic}^G((\PP^1)^n)_{\mathbb{R}}$ we have the subfan of the VGIT-fan contained in $H_{\{1,2\}}$. Via the map $i^*$ it maps to its quotient fan by the ray $\text{Cone}(\mathcal{O}(1,1, 0, \ldots, 0))$. Moreover, on the relative interior of the cones in the quotient fan, the set of semistable points is constant and equal to the semistable points on the cone in the preimage containing $\mathcal{O}(1,1, 0, \ldots, 0)$.
\end{Rmk}

\begin{Rmk}
The linear action of $T=(\mathbb{C}^*)^{n-1}$ on $\mathbb{C}^{2n-4}$ that arises above has been studied in the Master thesis \cite{toricblowup} of the author. It arises as the canonical representation of the toric variety $\text{Bl}_{n-2} \mathbb{P}^{n-3}$ as a torus quotient of affine space with respect to a symmetric linearization (i.e. the character $(1, \ldots, 1)$ of $T$). In the thesis a family of chambers of the secondary fan together with their quotients is explicitly identified. The quotients occurring in this family are iterated projective $\PP^1$-bundles over some $\text{Bl}_{k} \mathbb{P}^{n'-3}$ $(k \leq n'-2)$. 
\end{Rmk}

\bibliographystyle{alpha}
\bibliography{Biblio}

\end{document}